\newtheorem{theorem}{Theorem}
\newtheorem{corollary}[theorem]{Corollary}
\newtheorem{lemma}{Lemma}
\theoremstyle{definition}\newtheorem{definition}{Definition}
\theoremstyle{remark}
\newcommand{\RR}{\mathbb R}
\newcommand{\NN}{\mathbb N}
\newcommand{\F}{\mathcal{F}}
\newcommand{\N}{\mathcal{N}}
\newcommand{\e}{\varepsilon}
\newcommand{\X}{\boldsymbol{X}}
\newcommand{\Z}{\boldsymbol{Z}}
\renewcommand{\l}{\ell}
\renewcommand{\L}{\boldsymbol{L}}
\newcommand\numberthis{\addtocounter{equation}{1}\tag{\theequation}}
\begin{document}

\title{The Function-Indexed Sequential Empirical Process under Long-Range Dependence}
\author{Jannis Buchsteiner\thanks{\texttt{jannis.buchsteiner@rub.de}
}\\ \normalsize{\textit{Fakultät für Mathematik, Ruhr-Universität Bochum, 
Germany.}}}
\date{}
\maketitle

\begin{abstract}
Let $(\X_j)_{j\geq1}$ be a multivariate long-range dependent Gaussian process. We study the asymptotic behavior of the corresponding sequential empirical process indexed by a class of functions. If some entropy condition is satisfied we have weak convergence to a linear combination of Hermite processes.  \\[1ex]
\noindent {\textsf\textbf{Keywords:}} Entropy condition, Hermite process, multivariate long-range dependence, sequential empirical process
\end{abstract}

\section{Introduction}
For a real-valued stationary process $(Y_j)_{j\geq1}$ the sequential empirical process $(R_N(x,t))$ is defined by
\begin{equation}\label{equation classical sep}
 R_N(x,t):=\sum_{j=1}^{\lfloor Nt\rfloor}\left(1_{\{Y_j\leq x\}}-P(Y_j\leq x)\right),\hspace{5mm}x\in\RR,t\in[0,1].
\end{equation}
This process plays an important role in nonparametric statictics, for example in change-point analysis. If $(Y_j)_{j\geq1}$ is a subordinated Gaussian process which exhibits long-range dependence, weak convergence was established by \citet{Dehling}. The limiting process is given by the product of a deterministic function and an Hermite process $(Z_m(t))_{0\leq t\leq1}$. The latter can be represented as a stochastic integral in the spectral domain, more precisely
\begin{equation}\label{eq Hermite Prozess Darstellung}
Z_m(t)=K(m,D)\int_{\RR^m}^{''}\frac{e^{it(x_1+\ldots+x_m)}-1}{i(x_1+\ldots+x_m)}\prod_{j=1}^m|x_j|^{-(1-D)/2}\tilde{B}(dx_1)\cdots \tilde{B}(dx_m),
\end{equation}
where $\tilde{B}$ is a suitable random spectral measure and $K(m,D)$ is normalizing constant. The double prime indicates that integration excludes not only $\{x\in\RR^m:x^{(i)}=x^{(j)}, 1\leq i<j\leq m\}$ but also $\{x\in\RR^m:x^{(i)}=-x^{(j)}, 1\leq i<j\leq m\}$. For details and further representations see \citet{Taqqu1979} and \citet{Taqqu2010}. A first step in generalizing this result to multivariate observations was done by \citet{Marinucci}. He studied the asymptotics of the empirical process $(R_N(x,1))$ based on a two-dimensional long-range dependent process, where $\leq$ in \eqref{equation classical sep} is understood componentwise. However, this is nothing but checking if the observation lies inside a rectangle or not. Since there is no reason to restrict ourselves to such specific sets, it could be interessting to study indicators of balls or ellipsoids as well. This purpose yields the sequential empirical process indexed by a class of functions $\F$. 
\begin{definition}\label{definition function seq emp}
Let $(\X_j)_{j\geq 1}$ be stationary $\RR^p$-valued process and let $\F\subset L^2(P_{\X_1})$ be a class of square-integrable functions. Furthermore, let $\F$ be uniformly bounded, i.e. $\sup_{f\in\F}|f(x)|<\infty$ for all $x\in\RR^p$. We define the function-indexed sequential empirical process $(R_N(f,t))_{\F\times[0,1]}$ by
\begin{equation*}
R_N(f,t)=\sum_{j=1}^{\lfloor Nt\rfloor}f(\X_j)-E(f(\X_1)).
\end{equation*}
\end{definition}
Most asymptotic results one can find in the literature are formulated for the one-parameter process $(R_N(f,1))$. Under independence \citet{Dudley78} studied the empirical process indexed by a class of measurable sets, i.e. he considered $\F=\{1_{A}(\cdot):A\in\mathcal{A}\}$, where $\mathcal{A}$ is a suitable subset of the Borel $\sigma$-algebra. He stated different assumptions under which weak convergence to a Gaussian process holds, including a so-called metric entropy with inclusion. Generalizing this idea, \citet{Ossiander} introduced $L^2$-brackets to approximate the elements of $\F$. These brackets allow to study larger classes of functions as long as a metric entropy integrability condition is satisfied, see \citet[Theorem 3.1]{Ossiander}. A bracketing condition under strong mixing was stated by \citet{Andrews}. \citet{Doukhan} studied the function-indexed empirical process for $\beta$-mixing sequences. The case of Gaussian long-range dependent random vectors was already handled by \citet[Theorem 9]{Arcones}. The assumption on the bracketing number therein is very restrictive and will be considerably improved in this paper. Note, the only known result for the two-parameter process $(R_N(f,t))$ was given by \citet{Tusche2} for multiple mixing data.

The simplest way to define multivariate long-range dependence for an $\RR^p$-valued process $(\X_j)_{j\geq1}$, $\X_j=(X_j^{(1)},\ldots,X_j^{(p)})$, is the following one. Assume that the component-processes $(X_j^{(1)}),\ldots,(X_j^{(p)})$ are independent and that each of them individually satisfies the well-known time-domain long-range dependence condition that is
\begin{equation*}
r^{(i)}(k)=\operatorname{Cov}(X_1^{(i)},X_{1+k}^{(i)})=k^{-D_i}L_i(k), \hspace{5mm}1\leq i\leq p.
\end{equation*}
For $p=2$ this concept was used by \citet{Marinucci} and, if $L_1=L_2$ and $D_1=D_2$, by \citet{Taufer}. However, using this approach we can not model dependency between different components. Therefore, we have to replace the assumption of independent components by a suitable cross-covariance structure. For $1\leq i,j\leq p$ we call the function 
\begin{equation*}
r^{(i,j)}(k)=\operatorname{Cov}(X_1^{(i)},X_{1+k}^{(j)})
\end{equation*}
a cross-covariance function. \citet{Ho} considered a stationary bivariate Gaussian process such that for $1\leq i,j\leq2$, $r^{(i,j)}(k)$ is asymptotic equal to $k^{-\beta_{i,j}}$, $\beta_{i,j}>0$. The definition of multivariate long-range dependence we use here can also be labeled as a \textit{single-parameter long-range dependence}. It was originally stated by \citet[p. 2259]{Arcones}, see also \citet[p. 300]{Beran}, and is included in the very general concept of multivariate long-range dependence given by \citet[Definition 2.1]{Pipiras}.
\begin{definition}\label{Definition multivariate LRD}
A stationary $\RR^p$-valued process $(\X_j)_{j\geq1}$ with finite second moments is called long-range dependent if for each $1\leq i,j\leq p$
\begin{equation}\label{eq cross}
r^{(i,j)}(k)=c_{ij}k^{-D}L(k)\hspace{10mm}\textnormal{for }k\geq1,
\end{equation}
where $L$ is slowly varying at infinity, $0<D<1$, and $c_{ij}\in\RR$ are not all equal to zero.
\end{definition}
In the definition given by \citet{Pipiras} the long-range dependence parameter $D$ also depends on $i$ and $j$ so that the cross-covariance is given by
\begin{equation}\label{eq cross alternativ}
r^{(i,j)}(k)=c_{ij}k^{-(D_i+D_j)/2}L(k)\hspace{10mm}0<D_i,D_j<1.
\end{equation} 
Although this assumption is more realistic, multiple different parameters yield a degenerate limiting behavior, see \citet[p. 308]{Leonenko}. We briefly discuss this phenomenon subsequent to Theorem \ref{Theorem Reduction Functions}.

\section{Assumptions and Techniques}
From the probabilistic point of view we will interpret $(R_N(f,t))$ as a random element in $\l^{\infty}(\F\times[0,1])$, that is the space of real-valued bounded functions on $\F\times[0,1]$. The uniformly boundedness of $\F$ stated in Definition \ref{definition function seq emp} ensures well-definedness. We equip $\l^{\infty}(\F\times[0,1])$ with the supremum norm and the corresponding Borel $\sigma$-algebra. In general, empirical processes are not measurable with respect to the Borel $\sigma$-algebra generated by the uniform metric, see \citet[p. 157]{Billingsley}. \citet{Dehling} handled this problem by considering the open ball $\sigma$-algebra instead of the Borel one. We will use the idea of outer expectation introduced by Hoffmann-Jørgensen. For the sake of completeness we recall the definition. A detailed introduction to this concept can be found in \citet[chapter 1]{vanderVaart}. 
\begin{definition}\label{def outer}
Let $(\Omega, \mathcal{A},P)$ be an arbitrary probability space and let $\hat{\RR}$ be the extended real line equipped with the Borel $\sigma$-algebra.
\begin{itemize}
\item[\textit{i)}] For any $B\subset\Omega$ we call
\begin{equation*}
P^*(B):=\inf\left\{P(A):\hspace{2mm}A\supset B, A\in\mathcal{A}\right\}
\end{equation*}
the outer probability.
\item[\textit{ii)}] For any map $X:\Omega\rightarrow\hat{\RR}$ we call
\begin{equation*}
E^*X:=\inf\left\{EY:\hspace{2mm}Y\geq X, Y:\Omega\rightarrow\hat{\RR} \textnormal{ measurable and $EY$ exists}\right\}
\end{equation*}
the outer integral of $X$.
\item[\textit{iii)}] A sequence of maps $(X_n)_{n\geq1}$, taking values in a metric space $S$, converges weakly to a Borel-measurable random variable $X:\Omega\rightarrow S$ if
\begin{equation*}
E^*f(X_n)\longrightarrow Ef(X)
\end{equation*}
for all continuous, bounded functions $f:S\rightarrow\RR$.
\end{itemize}
\end{definition}
If measurability holds, part \textit{iii)} of the above definition conforms with classical weak convergence of random variables. Moreover, one can show that the Portmanteau theorem,  continuous mapping theorem, Prohorov's theorem, and other well-known results are still applicable in the context of outer weak convergence, see \citet[section 1.3]{vanderVaart}. Convergence in outer probability and outer almost sure convergence can be defined analogous.

 From now on assume that the process $(\X_j)_{j\geq1}$ is standard normal and long-range dependent in the sense of Definition \ref{Definition multivariate LRD}, i.e.
\begin{align*}
EX^{(i)}_1=&0\hspace{5mm}1\leq i \leq p\\
EX^{(i)}_1X^{(j)}_1=&\delta_{ij}\numberthis\label{equation components uncorrelated}\\
r^{(i,j)}(k)=&c_{ij}L(k)k^{-D}\hspace{10mm}\textnormal{for }k\geq1.
\end{align*}
The existence of such a process is ensured because there exist a linear representation for multivariate long-range dependent processes, see \citet[Corollary 4.1]{Pipiras}. Condition \eqref{equation components uncorrelated} is not restrictive, since for an arbitrary covariance matrix $\Sigma$ we have $G\X_1\sim\N(0,\Sigma)$, where $G$ is the Cholesky decomposition of $\Sigma$. In this case one can study $(R_N(f\circ G,t))$ instead of $(R_N(f,t))$.

By $\N_p$ we denote a $p$-dimensional standard normal distribution and by $H_k$ the $k$-th Hermite polynomial given by
\begin{equation*}
 H_k(x):=(-1)^ke^{x^2/2}\frac{d^k}{dx^k}e^{-x^2/2}.
\end{equation*}
Using these univariate polynomials, we define multivariate Hermite polynomials by
\begin{equation*}
H_{l_1,\ldots,l_p}(x):=H_{l_1}(x^{(1)})H_{l_2}(x^{(2)})\cdots H_{l_p}(x^{(p)}),\hspace{10mm}x\in\RR^p.
\end{equation*}
The collection of all multivariate Hermite polynomials of order $p$ forms an orthogonal basis of $L^2(\N_p)$, see \citet[p. 122]{Beran}. Thus, for any $f\in \F\subset L^2(\N_p)$ we have a $L^2$-series expansion of $f(\X_j)-E(f(\X_1))$ namely
\begin{equation*}
f(\X_j)-E(f(\X_1))=\sum_{k=m(f)}^{\infty}\sum_{l_1+\ldots+l_p=k}\frac{J_{l_1,\ldots,l_p}(f)}{l_1!\cdots l_p!}H_{l_1,\ldots,l_p}(\X_j).
\end{equation*}
The index $m(f)\geq1$ denotes the order of the first non-zero Hermite coefficient. We call the minimum $m:=\min\{m(f):f\in\F\}$ the Hermite rank of $\F$. Note, the Hermite rank $m$ is only determined by $f$ and not by a specific choice of $c_{ij}$.

Next we must account for the fact that the elements of $\F$ can be well approximated based on a relatively small number of functions. The definition of $L^2$-brackets we give next based on that of \citet{Ossiander}.
\begin{definition}
\begin{itemize}
\item[\textit{i)}] For $\e>0$ and $l,u\in\F$ with $l\leq u$ we call $\{f\in\F:l\leq f\leq u\}$ a $\e$-bracket if $\|u-l\|_2\leq\e$, where $\|\cdot\|_2$ denotes the $L^2$-norm with respect to $\N_p$.
\item[\textit{ii)}]The smallest number $N(\e)$ of $\e$-brackets needed to cover $\F$ is called bracketing number.
\end{itemize}
\end{definition}
Obviously, the bracketing number increases if $\e$ tends to zero. Therefore, we assume that the following entropy condition holds
\begin{equation}\label{B}
\int\limits_0^1\e^{r-1}N(\e)^2d\e<\infty~~~~\textnormal{ for some integer } r\geq 1.
\end{equation}
Roughly speaking, condition \eqref{B} is satisfied as long as $N(\e)$ grows polynomial in $\e^{-1}$. Although under independence one can handle exponential growth rates, see \citet[Theorem 3.1]{Ossiander}, the entropy condition we require here is not untypical for the dependent case. Similar conditions were stated by \citet[Theorem 2.2]{Andrews} and \citet[Theorem 2.5]{Tusche2}. Besides, \eqref{B} is a much weaker assumptions than the one stated by \citet[(5.1)]{Arcones} namely
\begin{equation*}
\int\limits_0^1N(\e)d\e<\infty.
\end{equation*}

Additionally, we need uniformly bounded $2q$-th moments for all $f\in\F$, where $q$ depends on the Hermite rank $m$, the long-range dependence parameter $D$, and $r$ from \eqref{B}. More precisely,
\begin{equation}\label{C}
\sup_{f\in\F}E(f(\X_1))^{2q}<\infty,\hspace{10mm}\textnormal{ where }2q>\frac{mDr}{1-mD}\vee mr.
\end{equation}
This choice of range for $2q$ is determined by technical constraints arising in the proof of Lemma \ref{lemma 2q-te Momente}. The lower bound is $mr$, if and only if $D<1/(m+1)$.

\section{Results}
\begin{theorem}\label{Theorem Limit Functions}
  Let $(\X_j)_{j\geq1}$ be a $p$-dimensional standard Gaussian long-range dependent process in the sense of Definition \ref{Definition multivariate LRD} and let $\F$ be a uniformly bounded class of functions satisfying \eqref{B} and \eqref{C}. Moreover, let $0<D<1/m$, where $m$ is the Hermite rank of $\F$, and set 
 \begin{equation*}
d_N^2=\operatorname{Var}\left(\sum_{j=1}^N H_m(X_j^{(1)})\right).
\end{equation*}
 Then 
 \begin{equation*}
 \left\{d_N^{-1}R_N(f,t): (f,t)\in\F\times [0,1]\right\}\phantom{\Biggl(}
\end{equation*}
converges weakly in $\l^{\infty}(\F\times [0,1])$ in the sense of Definition \ref{def outer}, to
 \begin{multline}\label{equation limiting process}
 \Biggl\{\sum_{j_1,\ldots,j_m=1}^p \tilde{J}_{j_1,\ldots,j_m}(f)\tilde{K}_{j_1,\ldots,j_m}(m,D)
 \int_{\RR^m}^{''}\frac{e^{it(x_1+\ldots+x_m)}-1}{i(x_1+\ldots+x_m)}\prod_{j=1}^m|x_j|^{-(1-D)/2}\\\tilde{B}^{(j_1)}(dx_1)\cdots \tilde{B}^{(j_m)}(dx_m): 
(f,t)\in\F\times [0,1]\Biggr\} .
\end{multline}
The measures $\tilde{B}^{(1)},\ldots,\tilde{B}^{(p)}$ are suitable Hermitian Gaussian random measures, \linebreak $\tilde{K}_{j_1,\ldots,j_m}(m,D)$ is a normalizing constant, $1\leq j_1,\ldots,j_m \leq p$, and
\begin{equation*}
\tilde{J}_{j_1,\ldots,j_m}(f)=(m!)^{-1}E\left(f(\X_1)\prod_{i=1}^p H_{i(j_1,\ldots,j_m)}(X_1^{(i)})\right),
\end{equation*}
where $i(j_1,\ldots,j_m)$ is the number of indices $j_1,\ldots,j_m$ that are equal to $i$.
\end{theorem}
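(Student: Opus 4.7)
The plan is to establish weak convergence in $\l^\infty(\F\times[0,1])$, in the sense of Definition~\ref{def outer}, via the two classical ingredients: convergence of finite-dimensional distributions and asymptotic equicontinuity of $d_N^{-1}R_N$ on $\F\times[0,1]$ with respect to a suitable semimetric. Combined with the standard outer weak convergence criterion \citep[Theorem~1.5.4]{vanderVaart}, these two steps yield the statement.

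For the finite-dimensional limits I would first invoke the reduction principle (Theorem~\ref{Theorem Reduction Functions}) to replace $d_N^{-1}R_N(f,t)$, uniformly in $(f,t)\in\F\times[0,1]$, by the lowest-order Hermite contribution
\begin{equation*}
d_N^{-1}\sum_{l_1+\cdots+l_p=m}\frac{J_{l_1,\ldots,l_p}(f)}{l_1!\cdots l_p!}\sum_{j=1}^{\lfloor Nt\rfloor}H_{l_1,\ldots,l_p}(\X_j);
\end{equation*}
the hypothesis $D<1/m$ guarantees that all higher-order Hermite terms are negligible after normalisation by $d_N$. What remains is the joint weak convergence of the vector-valued partial-sum process indexed by the multi-indices with $l_1+\cdots+l_p=m$ and by $t\in[0,1]$, which is a multivariate version of the Dobrushin-Major/Taqqu convergence theorem applied to the Gaussian process $(\X_j)$ with cross-covariances as in~\eqref{eq cross}. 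Rewriting the limit as a single sum over $m$-tuples $(j_1,\ldots,j_m)\in\{1,\ldots,p\}^m$ and using the combinatorial identity $\binom{m}{l_1,\ldots,l_p}\tilde{J}_{j_1,\ldots,j_m}(f)=J_{l_1,\ldots,l_p}(f)/(l_1!\cdots l_p!)$, valid whenever $(l_1,\ldots,l_p)$ records the multiplicities of $(j_1,\ldots,j_m)$, produces the representation in~\eqref{equation limiting process}. Since $f\mapsto\tilde{J}_{j_1,\ldots,j_m}(f)$ is continuous on $L^2(\N_p)$, the continuous mapping theorem delivers fidi convergence at arbitrary $(f_1,t_1),\ldots,(f_k,t_k)$.

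The more delicate step is asymptotic equicontinuity. I would equip $\F\times[0,1]$ with a semimetric of the form $\rho((f,s),(g,t))=\|f-g\|_2+|t-s|^{1-mD/2}$ reflecting the natural scaling of $d_N$, and then perform a standard chaining argument: for each $k\geq 1$ choose a minimal cover of $\F$ by $N(2^{-k})$ brackets of $L^2$-size $2^{-k}$, approximate each $f\in\F$ successively by bracket endpoints, and telescope the resulting differences. The crucial input is the moment estimate of Lemma~\ref{lemma 2q-te Momente}, which under~\eqref{C} controls $E|d_N^{-1}R_N(f,t)|^{2q}$ by an appropriate power of $\|f\|_2$, together with a maximal inequality and Markov's inequality. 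Combined with the entropy condition~\eqref{B}, the chain then becomes summable, yielding $\lim_{\delta\to 0}\limsup_{N}P^{*}(\sup_{\rho((f,s),(g,t))<\delta}d_N^{-1}|R_N(f,t)-R_N(g,s)|>\eta)=0$ for every $\eta>0$. The main obstacle is precisely this moment bound: under long-range dependence the variance of $R_N(f,t)$ couples all Hermite coefficients of $f$ with all cross-covariances $c_{ij}$, so the classical moment computations for independent or weakly dependent data no longer apply. The range of $q$ chosen in~\eqref{C} is exactly what is needed to balance the bracketing number $N(\e)^{2}$ appearing in~\eqref{B} against the $L^{2}$-size of the brackets, and verifying this quantitative trade-off is where the bulk of the analytical work lies.
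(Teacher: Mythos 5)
The central gap is in your equicontinuity step, and it stems from a mischaracterization of the key moment bound. You claim that Lemma~\ref{lemma 2q-te Momente} controls $E\left|d_N^{-1}R_N(f,t)\right|^{2q}$ by a power of $\|f\|_2$; it does not. That lemma bounds the $2q$-th moments of the \emph{reduced} increments $S_N(n,g,h)$, i.e.\ of differences from which the entire order-$m$ Hermite contribution has been subtracted, and the resulting bound $C(n/N)N^{-\gamma}$ carries no dependence on the bracket size $\|h-g\|_2$ at all (the underlying Lemma~\ref{lemma Taqqu1977} only produces the factor $1\vee\bigl(Eg(\X_1)^{2q}\bigr)^q$, which does not shrink with the bracket). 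Consequently the chaining you propose for $d_N^{-1}R_N$ itself has no usable input: under the normalization $d_N^{-1}$ the dominant order-$m$ part of $R_N(f,t)$ does not vanish, so bracket-size-dependent high-moment bounds for the full process are exactly what is \emph{not} available here; the decay in $N$ is bought by passing to functions of Hermite rank at least $m+1$. In the paper, the entropy condition \eqref{B} and the chaining are used once, inside Lemma~\ref{lemma chaining function}, to prove the reduction principle, and no separate asymptotic-equicontinuity argument for $R_N$ is ever needed: once Theorem~\ref{Theorem Reduction Functions} (which you already invoke) is in hand, the remaining main term is a \emph{finite} linear combination $\sum_{l_1+\cdots+l_p=m}J_{l_1,\ldots,l_p}(f)(l_1!\cdots l_p!)^{-1}\,d_N^{-1}\sum_{j\leq Nt}H_{l_1,\ldots,l_p}(\X_j)$, whose dependence on $f$ sits entirely in uniformly bounded, $\|\cdot\|_2$-Lipschitz coefficients, so uniform convergence over $\F\times[0,1]$ follows from the functional convergence of the finitely many $f$-free partial-sum processes together with the almost sure representation theorem. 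Your fidi-plus-equicontinuity architecture is therefore redundant where it overlaps with the reduction principle and unsupported where it goes beyond it.

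The second substantive point you gloss over is the identification of the limit. Appealing to ``a multivariate version of the Dobrushin--Major/Taqqu theorem'' hides precisely the work the paper has to do: Lemma~\ref{lemma Runge}, combined with the construction of an invertible matrix of monomials $\bigl(\prod_i(a^{(i)}_{k_1,\ldots,k_p})^{m_i}\bigr)$ and its inverse, is used to express the multivariate Hermite polynomials $H_{l_1,\ldots,l_p}(\X_j)$ through univariate polynomials $H_m\bigl(\sum_i a^{(i)}_{k_1,\ldots,k_p}X_j^{(i)}\bigr)$, so that Arcones' joint functional non-central limit theorem for $\bigl(H_m(X_j^{(1)}),\ldots,H_m(X_j^{(p)})\bigr)$ (with the corrected domain of integration $\RR^m$) applies, and multilinearity of the multiple Wiener--It\^o integral then yields the sum over $(j_1,\ldots,j_m)$ in \eqref{equation limiting process}. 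Your combinatorial identity relating $\tilde{J}_{j_1,\ldots,j_m}(f)$ to $J_{l_1,\ldots,l_p}(f)$ is correct, but it only relabels coefficients; it does not by itself justify the passage from the vector of multivariate Hermite partial sums to the $p^m$ stochastic integrals with respect to $\tilde{B}^{(1)},\ldots,\tilde{B}^{(p)}$. To repair the sketch, either cite Arcones' Theorem~6/(3.6) precisely and add the Runge-type reduction, or reproduce an equivalent identification argument.
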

The limiting process \eqref{equation limiting process} can be interpreted as a generalization of the univariate limit studied by \citet[Theorem 1.1]{Dehling}, especially the integrand is the same. But due to the fact that each component of $(\X_j)_{j\geq 1}$ contributes to the limit, we are faced with $p$ different integrators. More precisely, the above sum includes all possibilities to generate a $m$-fold stochastic integral using $p$ Gaussian random measures. Such integrals were initially studied by \citet[section 2]{Fox} in the case of real-valued Gaussian random measures. The process \eqref{equation limiting process} appeared first time in a paper by \citet[Theorem 6]{Arcones}. He studied Theorem \ref{Theorem Limit Functions} in the non-uniform case, particularly, Arcones proved  a functional non-central limit theorem for $(f(\X_j))_{j\geq1}$, $f\in L^2(\N_p)$. We have corrected the domain of integration, i.e. we replaced $[-\pi,\pi]^m$ by $\RR^m$, see also \citet[Theorem 4.22]{Beran}. 

\citet[Section 3]{Tusche} calculated the bracketing number for a few different classes of functions including indicators of hyperrectangles. In all cases the bracketing number grows polynomially so that condition \eqref{B} is fulfilled. Thus, Theorem \ref{Theorem Limit Functions} implies a multivariate version of Theorem 1.1 by \citet{Dehling}.
\begin{corollary}\label{Corollary Limit Functions}
  Let $(\X_j)_{j\geq1}$ be a $p$-dimensional standard Gaussian long-range dependent process in the sense of Definition \ref{Definition multivariate LRD} and let $G:\RR^p\rightarrow\RR^k$ be a measurable function. Moreover, let $0<D<1/m$, where $m$ is the Hermite rank of $\F=\{1_{\{G(\cdot)\leq x\}}:x\in\RR^k\}$. Then 
 \begin{equation*}
 \left(d_N^{-1}\sum_{j=1}^{\lfloor Nt\rfloor}\left(1_{\{G(\X_j)\leq x\}}-P(G(\X_1)\leq x)\right)\right)\phantom{\Biggl(}
\end{equation*}
converges weakly in $\l^{\infty}(\F\times [0,1])$ in the sense of Definition \ref{def outer}, to \eqref{equation limiting process}.
\end{corollary}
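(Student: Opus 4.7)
The plan is to deduce this as a direct application of Theorem \ref{Theorem Limit Functions} to the class $\F=\{1_{\{G(\cdot)\leq x\}}:x\in\RR^k\}$. Only the hypotheses of that theorem---uniform boundedness, the entropy condition \eqref{B}, and the moment bound \eqref{C}---need to be checked. Since every $f\in\F$ takes values in $\{0,1\}$, one has $\sup_{f\in\F}|f(x)|\leq1$ and $\sup_{f\in\F}E(f(\X_1))^{2q}\leq 1$ for every integer $q$, so uniform boundedness and \eqref{C} are immediate. All the substance therefore lies in \eqref{B}.

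To verify \eqref{B}, I would reduce the bracketing problem to one on $\RR^k$ under the pushforward $\mu := G_*\N_p$. Writing $\mathcal{G} := \{1_{\{z\leq x\}}:x\in\RR^k\}$, any bracket $[l_0,u_0]$ for $\mathcal{G}$ with $\|u_0-l_0\|_{L^2(\mu)}\leq\e$ gives rise via composition with $G$ to a bracket $[l_0\circ G,u_0\circ G]$ for $\F$ of the same $L^2(\N_p)$-size, because
\begin{equation*}
\int_{\RR^p}(u_0\circ G-l_0\circ G)^2\,d\N_p=\int_{\RR^k}(u_0-l_0)^2\,d\mu.
\end{equation*}
Hence $N_\F(\e)\leq N_\mathcal{G}(\e)$, and the bracketing question is transferred to the much more tractable class $\mathcal{G}$.

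Since $\mathcal{G}$ is a sub-class of indicators of hyperrectangles in $\RR^k$, the construction described by \citet[Section 3]{Tusche}---partitioning each one-dimensional marginal of $\mu$ at its $\delta^2$-quantiles and taking products of the resulting intervals---delivers a polynomial bracketing bound $N_\mathcal{G}(\e)\leq C\e^{-2k}$ for some constant $C>0$. Choosing any integer $r>4k$ then yields
\begin{equation*}
\int_0^1\e^{r-1}N_\F(\e)^2\,d\e \leq C^2\int_0^1\e^{r-1-4k}\,d\e<\infty,
\end{equation*}
so that \eqref{B} holds and Theorem \ref{Theorem Limit Functions} applies to give exactly the claimed weak convergence. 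The only conceptual point to watch is that Tusche's polynomial bracketing bound must remain valid for the arbitrary probability measure $\mu$, which need not be Lebesgue-continuous; this is in fact automatic because the quantile construction relies only on the one-dimensional marginals of $\mu$ and is insensitive to any further structure. This is the closest the argument comes to a genuine obstacle, and it dissolves upon a careful reading of the construction.
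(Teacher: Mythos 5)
Your proposal is correct and follows essentially the same route as the paper, which obtains the corollary by checking the hypotheses of Theorem \ref{Theorem Limit Functions}: uniform boundedness and \eqref{C} are trivial for indicator functions, and \eqref{B} holds because the bracketing numbers for such indicator classes grow only polynomially, as computed by \citet[Section 3]{Tusche}. Your explicit pushforward reduction to orthant indicators on $\RR^k$ and the choice of an integer $r>4k$ simply spell out details the paper delegates to that citation.
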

We prove Theorem \ref{Theorem Limit Functions} by establishing a weak uniform reduction principle. The formulation is similiar to Theorem 3.1 by \citet{Dehling}. The notation $P^*$ denotes outer probability.
\begin{theorem}[Reduction principle]\label{Theorem Reduction Functions}
Under the assumptions of Theorem \ref{Theorem Limit Functions} there exist constants $C, \kappa>0$ such that for any
$0<\e\leq1$
\begin{align}
 P^*\Biggl(&\max_{n\leq N}\sup_{f\in\F}d_N^{-1}\left|\sum_{j=1}^n\left(f(\X_j)-E(f(\X_1))-\hspace{-3pt}
 \sum_{l_1+\ldots+l_p=m}\frac{J_{l_1,\ldots,l_p}(f)}{l_1!\cdots l_p!}H_{l_1,\ldots,l_p}(\X_j)\right)\right|
         >\varepsilon\Biggr)\notag\\
\leq &CN^{-\kappa}(1+\varepsilon^{-(2q+r+1)}).\notag
\end{align}
\end{theorem}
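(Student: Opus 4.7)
The plan is to exploit the orthogonal Hermite decomposition together with the $L^2$-bracketing structure of $\F$, reducing the theorem to a maximal $2q$-th moment inequality for partial sums of functions of Hermite rank strictly greater than $m$. Writing
\[
f(\X_j)-E(f(\X_1))=\sum_{l_1+\cdots+l_p=m}\frac{J_{l_1,\ldots,l_p}(f)}{l_1!\cdots l_p!}H_{l_1,\ldots,l_p}(\X_j)+Rf(\X_j),
\]
the quantity to control becomes $d_N^{-1}\max_{n\leq N}\sup_{f\in\F}\bigl|\sum_{j=1}^n Rf(\X_j)\bigr|$. For a single $f$, a direct covariance computation based on the diagram formula and the cross-covariance decay of Definition~\ref{Definition multivariate LRD} shows that every chaos of order $k>m$ contributes a variance of order $N^{2-kD}$ (when $kD<1$) or $N$ (otherwise), so $\operatorname{Var}\bigl(\sum_{j=1}^N Rf(\X_j)\bigr)=o(d_N^2)$. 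The substance of the theorem is to promote this pointwise smallness into a quantitative uniform statement in $\varepsilon$ and $N$.

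I would carry this out by dyadic chaining against the brackets. Set $\varepsilon_k=2^{-k}$ for $0\leq k\leq K(N)\asymp\log N$, fix minimal $\varepsilon_k$-bracket covers of cardinality $N(\varepsilon_k)$, and for each $f\in\F$ denote by $\pi_k f$ the lower endpoint of the bracket at level $k$ containing $f$. The increments $g_k:=\pi_k f-\pi_{k-1}f$ lie in an explicit finite set of cardinality at most $N(\varepsilon_k)N(\varepsilon_{k-1})$ and satisfy $\|g_k\|_2\leq 3\varepsilon_{k-1}$. Applying Lemma~\ref{lemma 2q-te Momente} to each $Rg_k$ supplies a maximal $2q$-th moment bound that depends on $g_k$ only through a norm controlled by the bracket width and that is uniformly sharper than $d_N^{2q}$ by a polynomial factor $N^{-\kappa_0}$; this is precisely where assumption \eqref{C} is consumed. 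A union bound combined with Markov's $2q$-th moment inequality at each level, the tolerance $\varepsilon$ being dispatched across scales by weights $\delta_k$ summing to $\varepsilon$, produces an expression of the form
\[
\sum_{k=0}^{K} N(\varepsilon_k)^2\cdot\frac{C\,\varepsilon_{k-1}^{2q}\,N^{-\kappa_0}}{\delta_k^{2q}},
\]
which closes via the dyadic form of \eqref{B}, namely $\sum_k \varepsilon_k^{r}N(\varepsilon_k)^2<\infty$. Careful bookkeeping of the three contributions---the entropy weight, Markov's factor, and the $\ell^{1}$-dispatch $\sum_k\delta_k=\varepsilon$---together with the coarse term $R\pi_0 f$ that supplies the additive ``$1$'' and with the absorption of the unchained tail $R(f-\pi_K f)$ by choosing $K$ so that the residual brackets have $L^2$-diameter $\leq N^{-C}$, produces the announced exponent $2q+r+1$ and the $N^{-\kappa}$ decay.

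The decisive and only genuinely non-routine step is Lemma~\ref{lemma 2q-te Momente} itself. What makes it delicate is that the moment bound must simultaneously (i) cover \emph{all} Hermite chaoses of order $>m$ at once, (ii) depend on its argument only through norms comparable with the $L^2$-bracket width so as to be usable inside the chaining, and (iii) beat $d_N^{2q}$ by a uniform polynomial margin in $N$. The threshold $D<1/(m+1)$ underlying the bifurcation in \eqref{C} records precisely the status of the borderline chaos of order $m+1$: for $(m+1)D<1$ it is itself long-range dependent, while for $(m+1)D\geq1$ it behaves in a short-range fashion, and the attainable moment exponent changes accordingly. Once the lemma is in hand, the reduction of the uncountable $\sup_{f\in\F}$ to a countable supremum over the chain vertices makes the outer probability $P^{*}$ of Definition~\ref{def outer} agree with the ordinary probability, and the remaining steps are a standard adaptation of the Ossiander--Andrews bracketing scheme to the non-$\sqrt{N}$ normalization $d_N$ and to the multivariate Gaussian input.
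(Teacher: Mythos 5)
Your overall strategy (project onto the Hermite level $m$, then chain the remainder against $L^2$-brackets using the moment bound of Lemma \ref{lemma 2q-te Momente} and close with \eqref{B}) is the paper's strategy, but as written the argument has three genuine gaps. First, the theorem asserts a bound on $\max_{n\leq N}\sup_{f\in\F}$, while Lemma \ref{lemma 2q-te Momente} is a fixed-$n$ moment bound, not a maximal inequality, and your chaining runs only over $\F$; you never say how the maximum over $n$ is obtained. A naive union bound over $n\leq N$ fails, because the fixed-$n$ estimate (Lemma \ref{lemma chaining function}) only carries the factor $(n/N)N^{-\rho}$ with $\rho$ small, and summing $(n/N)$ over $n\leq N$ costs a factor of order $N$. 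The paper first proves the fixed-$n$ statement and then passes to $\max_{n\leq N}$ by adapting the dyadic-in-time argument of Dehling and Taqqu; this step is absent from your sketch. Second, Lemma \ref{lemma 2q-te Momente} (through Lemmas \ref{lemma Taqqu1977} and \ref{lemma bardet}) requires $r^{(i,j)}(k)\leq 1/(4pq-1)$ for \emph{all} $k$, which the assumptions of Theorem \ref{Theorem Limit Functions} only guarantee for large lags; you apply the lemma directly, whereas the proof must first reduce to this case by splitting $(\X_j)$ into $b$ interleaved subsequences (Taqqu's blocking device), as the paper does.

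Third, your bookkeeping at the fine end of the chain does not work as described. The bound of Lemma \ref{lemma 2q-te Momente} is uniform in $g,h\in\F$ and does \emph{not} shrink with the bracket width (the constant involves $1\vee(Ef(\X_1)^{2q})^q\geq1$), so the level-$k$ summand $N(\varepsilon_k)^2\,\varepsilon_{k-1}^{2q}\,N^{-\kappa_0}\delta_k^{-2q}$ you propose uses an Ossiander-type sharpening the lemma does not supply; the paper instead closes the sum using the prefactor $(Nd_N^{-1})^{r}$ on the left of Lemma \ref{lemma 2q-te Momente}, the factor $2^{rK}$, and $\sum_k 2^{-rk}N_k^2<\infty$, which is the dyadic form of \eqref{B}. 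More importantly, ``choosing $K$ so that the residual brackets have $L^2$-diameter $\leq N^{-C}$'' cannot by itself control the supremum of the unchained tail over the uncountable class $\F$: one needs the two-sided bracket $l^{(K)}\leq f\leq u^{(K)}$ and monotonicity to dominate the residual by $S_N(n,l^{(K)}_{i_K(f)},u^{(K)}_{i_K(f)})$ plus the deterministic term $2nd_N^{-1}2^{-K}$ plus $2\cdot2^{-K}d_N^{-1}\sum_{l_1+\ldots+l_p=m}\bigl|\sum_{j=1}^n H_{l_1,\ldots,l_p}(\X_j)\bigr|$, the last of which requires its own Chebyshev bound and is the source of the $(n/N)^{2-mD}$ term in Lemma \ref{lemma chaining function}. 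The paper's choice $K=\bigl\lceil\log_2(8Nd_N^{-1}/\varepsilon)\bigr\rceil$ is $\varepsilon$-dependent, and it is precisely this dependence, via $(K+3)^{4q}\leq C\varepsilon^{-1}N^{\delta}$, that produces the exponent $2q+r+1$; with your $\varepsilon$-free $K\asymp\log N$ the announced exponent is not accounted for.
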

It is well known that $d_N$ behaves asymptotically like $N^{1-mD/2}L^{m/2}$, see \citet[Theorem 3.1]{Taqqu75}. If the cross-covariance is  given by \eqref{eq cross alternativ} instead of \eqref{eq cross} one has to normalize with $\operatorname{Var}\left(\sum_{j=1}^N H_m(X_j^{(k)})\right)$, where $D_k=\min\{D_i:1\leq i\leq p\}$. Consequently , only $\sum_{j=1}^{\lfloor Nt\rfloor} H_m(X_j^{(k)})$ contributes to the limit because all other terms tend to zero. This behavior was already studied by \citet[Theorem 1b]{Leonenko} and reasoned why we focus on a single long-range dependence parameter only. Note, all lemmas in the following section remain valid if we would study \eqref{eq cross alternativ} instead of \eqref{eq cross}, only the proof of Lemma \ref{lemma 2q-te Momente} has to be slightly modified.

\section{Proofs}
The proof of Theorem \ref{Theorem Reduction Functions} is organized as follows. First we prove a moment inequality for partial sums of subordinated Gaussian random vectors (Lemma \ref{lemma Taqqu1977}). We can use this result to control the increments of the reduced function-indexed process (Lemma \ref{lemma 2q-te Momente}). This is essential for proving Lemma \ref{lemma chaining function}. 

The following lemma by \citet{Bardet} is the multivariate version of Lemma 4.5 by \citet{Taqqu77}. The term $\e$-\textit{standard} used therein denotes a collection of standardized Gaussian random vectors,  whose cross-covariance function is bounded by $\e$. Furthermore, $\sum\hspace*{-0.5mm}{'}$ is the sum over all different indices $1\leq t_i\leq n$, $1\leq i\leq p'$, such that $t_i\neq t_j$ for $i\neq j$.
\begin{lemma}[{\citet[Lemma 1]{Bardet}}]\label{lemma bardet}
Let $(\X_1,\ldots,\X_n)$ be a $\e$-standard Gaussian vector, $\X_t=(X_t^{(1)},\ldots,X_t^{(p)})\in\RR^{p}$, $p\geq 1$, and consider a set of functions $f_{j,t,n}\in L^2(\N_p)$, $1\leq j\leq p'$, $p'\geq 2$, $1\leq t\leq n$. For given integers $m\geq1$, $0\leq \alpha\leq p'$, $n\geq1$, define
\begin{equation*}
Q_n:=\max_{1\leq t\leq n}\sum_{1\leq s\leq n,s\neq t}\max_{1\leq u,v\leq p}\left|EX_t^{(u)}X_s^{(v)}\right|^m.
\end{equation*}
Assume that the functions $f_{1,t,n},\ldots,f_{\alpha,t,n}$ have a Hermite rank at least equal to $m$ for any $n\geq 1$, $1\leq t\leq n$, and that
\begin{equation*}
\e<\frac{1}{pp'-1}.
\end{equation*}
Then
\begin{equation*}
\sum_{t_1,\ldots,t_{p'}=1}^n\hspace*{-5mm}{'}~~\left|E\left(f_{1,t_1,n}(\X_{t_1})\cdots f_{p',t_{p'},n}(\X_{t_{p'}})\right)\right|\leq C(\e,p',m,\alpha,p) Kn^{p'-\alpha/2}Q_n^{\alpha/2},
\end{equation*}
where the constant $C(\e,p',m,\alpha,p)$ depends on $\e,p',m,\alpha,p$ only, and
\begin{equation*}
K=\prod_{j=1}^{p'}\max_{1\leq t\leq n}\|f_{j,t,n}\|_2.
\end{equation*}
\end{lemma}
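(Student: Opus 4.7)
The natural route is to reduce the bound to one on products of multivariate Hermite polynomials and then invoke the diagram (Isserlis--Wick) formula for centred Gaussian vectors. First I would expand each function in its Hermite series
\begin{equation*}
f_{j,t,n}(\X_t)=\sum_{|\boldsymbol{l}|\geq m_j}\frac{c_{\boldsymbol{l},j,t,n}}{\boldsymbol{l}!}H_{\boldsymbol{l}}(\X_t),
\end{equation*}
with $m_j=m$ for $j\leq\alpha$ and $m_j=0$ otherwise, and record Parseval's identity $\sum_{\boldsymbol{l}}c_{\boldsymbol{l},j,t,n}^{2}/\boldsymbol{l}!=\|f_{j,t,n}\|_2^{2}$. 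By linearity it then suffices to control, for each fixed tuple of multi-indices $(\boldsymbol{l}_1,\ldots,\boldsymbol{l}_{p'})$ with $|\boldsymbol{l}_j|\geq m_j$, the quantity ${\sum}'\,|E(H_{\boldsymbol{l}_1}(\X_{t_1})\cdots H_{\boldsymbol{l}_{p'}}(\X_{t_{p'}}))|$, and then recombine via Cauchy--Schwarz and Parseval to produce the factor $K$.

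Second, I would invoke the diagram formula for products of multivariate Hermite polynomials: the above expectation equals a sum, over complete pairings of the $|\boldsymbol{l}_1|+\cdots+|\boldsymbol{l}_{p'}|$ vertices carried by the factors, of products of the covariances $EX_{t_j}^{(u)}X_{t_{j'}}^{(u')}$. Because of orthogonality of Hermite polynomials, pairings with any edge internal to a single function vanish, so every vertex of a rank-$m$ function (those with $j\leq\alpha$) must be matched to a vertex of a different function. Each such function therefore contributes at least $m$ off-diagonal covariances to every surviving diagram---exactly the objects estimated by $Q_n$.

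Third, I would estimate each diagram by reorganising the nested sum over $(t_1,\ldots,t_{p'})$ according to the connected-component structure of the pairing. Indices $t_j$ with $j>\alpha$ are essentially unconstrained and contribute factors of $n$; each of the $\alpha$ indices tied to a rank-$m$ function contributes at most $(nQ_n)^{1/2}$ once the $m$ incident covariances have been absorbed into the definition of $Q_n$ (while $|\mathrm{cov}|\leq\varepsilon$ is used to dispose of any remaining edges). Collecting the factors gives the desired bound $n^{p'-\alpha/2}Q_n^{\alpha/2}$. The hypothesis $\varepsilon<1/(pp'-1)$ enters here to ensure that the sum over Hermite orders $|\boldsymbol{l}_j|$ is dominated by a convergent geometric series, producing the finite constant $C(\varepsilon,p',m,\alpha,p)$. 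The main obstacle is precisely this combinatorial bookkeeping: counting admissible pairings by type, tracking how the constraint $t_i\neq t_j$ in the primed sum interacts with internal edges, and verifying uniform summability of the tail over large $|\boldsymbol{l}_j|$.
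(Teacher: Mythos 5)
This lemma is not proved in the paper at all: it is quoted verbatim from \citet{Bardet} (Lemma 1 there), as the multivariate analogue of Lemma 4.5 of \citet{Taqqu77}, so there is no in-paper argument to compare your proposal against. That said, your plan --- Hermite expansion of each $f_{j,t,n}$, reduction via the diagram formula to expectations of products of multivariate Hermite polynomials, the observation that each of the $\alpha$ rank-$m$ factors must send at least $m$ edges to other factors, and the use of $\e<1/(pp'-1)$ to control the sum over Hermite orders --- is exactly the strategy of the original proofs of Taqqu and Bardet, so you have identified the canonical route.

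The difficulty is that, as written, the argument leaves its centre unproved. The assertion that each of the $\alpha$ special indices ``contributes at most $(nQ_n)^{1/2}$'' is precisely the hard step: an edge of a diagram may join two rank-$m$ factors, so its covariance must be shared between them, and the exponent $\alpha/2$ (rather than $\alpha$) comes from a splitting argument --- bound the product of the at-least-$m$ covariances attached to each special vertex, take the product over the $\alpha$ vertices, and note that each edge is counted at most twice, hence the square root --- combined with $|\operatorname{Cov}|\le\e$ to reduce higher powers $|\rho|^{k}$, $k\ge m$, to $|\rho|^{m}$ before summing against $Q_n$; one must also track how the constraint $t_i\neq t_j$ and edges within a single factor are handled. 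Similarly, the summation over Hermite orders is not a plain geometric series: the number of admissible diagrams grows factorially with the orders $|\boldsymbol{l}_j|$, and finiteness of $C(\e,p',m,\alpha,p)$ requires balancing the $\e$-powers against both the diagram count and the Parseval weights $\boldsymbol{l}_j!$ (via Cauchy--Schwarz), which is exactly where the threshold $1/(pp'-1)$ originates. So your proposal correctly reproduces the architecture of the known proof, but the ``combinatorial bookkeeping'' you defer is the actual substance of the lemma and would still have to be carried out (or the result simply cited, as the paper does).
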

For our purposes it is enough to consider functions $f_{j,t,n}$ which do not depend on $t$ and $n$, i.e. $f_{j,t,n}=g_j$. In this case the connection to the result of \citet[Lemma 4.5]{Taqqu77} becomes clearer. An advantage of the above bound is that the constant on the right-hand side is separated into $C(\e,p',m,\alpha,p)$ and $K$. This detail will help us to prove the following lemma.
\begin{lemma}\label{lemma Taqqu1977}
Let $(\X_j)_{j\geq1}$ be a $p$-dimensional standard Gaussian process which exhibits long-range dependence in the sense of Definition \ref{Definition multivariate LRD} and let $r^{(i,j)}(k)\leq 1/(4pq-1)$ for all $k\in\NN$, $1\leq i,j\leq p$ and some $q\in\NN$. Furthermore, let $g:\RR^p\rightarrow\RR$ be a measurable function such that the Hermite rank of $g$ is at least $m$ and $E(g(\X_1))^{2q}<\infty$. If $0<D<1/m$ then for all $n\in\NN$ we have
\begin{equation*}
E\left(\sum_{j=1}^ng(\X_j)\right)^{2q}\leq C\left(1\vee \left(Eg(\X_1)^{2q}\right)^q\right)\left(n\sum_{k=0}^n \max_{1\leq i,j\leq p}(r^{(i,j)}(k))^m\right)^{q}.
\end{equation*}
\end{lemma}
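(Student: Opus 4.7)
The approach is to expand the $(2q)$-th moment as a sum over index tuples and group the tuples by the partition of $\{1,\ldots,2q\}$ induced by equality of the indices:
\begin{equation*}
E\Bigl(\sum_{j=1}^n g(\X_j)\Bigr)^{2q} = \sum_{\pi}\,\sum\nolimits'_{t_1,\ldots,t_{|\pi|}=1}^n E\Bigl(\prod_{i=1}^{|\pi|} g(\X_{t_i})^{|\pi_i|}\Bigr),
\end{equation*}
where the inner $\sum'$ is over pairwise distinct $t_i$. For each partition $\pi$ with $p':=|\pi|$ blocks of sizes $|\pi_i|$, let $s:=s(\pi)$ be the number of singleton blocks; crucially, $g^{|\pi_i|}$ retains Hermite rank $\geq m$ only for singletons, since for $|\pi_i|\geq 2$ the function may have nonzero mean.

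I would then split into two regimes to control the inner sum. For $p'\leq q$, a bare H\"older inequality with conjugate exponents $2q/|\pi_i|$ yields $|E\prod_i g(\X_{t_i})^{|\pi_i|}|\leq (Eg^{2q})^{\sum_i|\pi_i|/(2q)}=Eg^{2q}$, and the crude count of $n^{p'}\leq n^q$ distinct tuples gives a contribution absorbed into $(nA)^q(1\vee (Eg^{2q})^q)$, where I abbreviate $A:=\sum_{k=0}^n\max_{i,j}|r^{(i,j)}(k)|^m\geq 1$. For $p'\geq q+1$ the bound from H\"older alone is too weak (the factor $A^q$ is missing), so I would invoke Lemma \ref{lemma bardet} with $\alpha=s$: its hypothesis $\e<1/(pp'-1)$ follows from $p'\leq 2q$ and the standing assumption $r^{(i,j)}(k)\leq 1/(4pq-1)$. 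A pigeonhole observation (a single block of size $>q$ would force the remaining $\geq q$ blocks to sum to $<q$, which contradicts $|\pi_i|\geq 1$) shows that \emph{all} $|\pi_i|\leq q$ in this regime, and Lyapunov then bounds the lemma's factor $K_\pi=\prod_i(Eg^{2|\pi_i|})^{1/2}$ by $Eg^{2q}$.

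The combinatorial heart is the identity $2p'-s=\sum_i(2-\mathbf{1}_{|\pi_i|=1})\leq\sum_i|\pi_i|=2q$, yielding $p'-s/2\leq q$, and $s/2\leq q$ from $s\leq 2q$. Using $Q_n\leq 2A$ and $A\geq 1$, Lemma \ref{lemma bardet} therefore delivers $n^{p'-s/2}Q_n^{s/2}K_\pi\leq C(nA)^q Eg^{2q}$. Summing over the (finitely many) partitions of $\{1,\ldots,2q\}$ and noting that $Eg^{2q}\leq 1\vee(Eg^{2q})^q$ gives the stated bound, with the constant $C$ depending only on $p$, $q$, $m$ and the threshold $1/(4pq-1)$.

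The main obstacle I foresee is the simultaneous control of the moment factor and the counting factor under the single integrability assumption $Eg^{2q}<\infty$. Lemma \ref{lemma bardet} \emph{requires} $Eg^{2|\pi_i|}<\infty$, which is genuinely available only when every block has size at most $q$; conversely, a pure H\"older bound forfeits the factor $A^q$ that must appear on the right-hand side to match the scaling $d_N^{2q}\sim(nA)^q$. The case split on $p'\leq q$ versus $p'\geq q+1$, with the pigeonhole observation guaranteeing uniformly bounded block sizes in the second regime, is precisely what reconciles the two tools.
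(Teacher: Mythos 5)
Your proposal is correct and takes essentially the same route as the paper's proof: expand the $2q$-th moment over distinct-index configurations, split into $p'\leq q$ (plain H\"older plus the crude count $n^{p'}\leq n^q$ with $A\geq 1$) and $p'>q$ (Lemma \ref{lemma bardet} applied with $\alpha$ equal to the number of multiplicity-one factors, noting all multiplicities are at most $q$ there), and then use the exponent bounds $p'-\alpha/2\leq q$, $\alpha/2\leq q$ together with Lyapunov's inequality to control the norm factor. No gaps.
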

\begin{proof}
We follow the lines of the proof given by \citet[Proposition 4.2.]{Taqqu77}. Using the multinomial theorem we get
\begin{align*}
&E\left(\sum_{j=1}^ng(\X_j)\right)^{2q}\\
=&\sum_{\substack{k_1+\ldots+k_n=2q\\k_1,\ldots,k_n\geq1}}\frac{(2q)!}{k_1!\cdots k_n!}Eg^{k_1}(\X_1)\cdots g^{k_n}(\X_n)\\
=&\sum_{p'=1}^{2q}\sum_{1\leq u_1<\ldots<u_{p'}\leq n}\sum_{\substack{k_{u_1}+\ldots+k_{u_{p'}}=2q\\k_{u_1},\ldots,k_{u_{p'}}\geq1}}
\frac{(2q)!}{k_{u_1}!\cdots k_{u_{p'}}!}Eg^{k_{u_1}}(\X_{u_1})\cdots g^{k_{u_{p'}}}(\X_{u_{p'}})\\
\leq &C\sum_{p'=1}^{2q}\max_{\substack{v_1+\ldots+v_{p'}=2q\\v_1,\ldots,v_{p'}\geq1}}\sum_{u_1,\ldots,u_{p'}=1}^n\hspace*{-5mm}{'}~~
\left|Eg^{v_1}(\X_{u_1})\cdots g^{v_{p'}}(\X_{u_{p'}})\right|.\numberthis\label{eq Taqqu77 two cases}
\end{align*}
Let us distinguish two cases,  $p'\leq q$ and $p'>q$, starting with the first one. For $v_1,\ldots,v_{p'}\geq1$ satisfying $v_1+\ldots+v_{p'}=2q$ we have by Hölder's inequality
\begin{align*}
\left|Eg^{v_1}(\X_{u_1})\cdots g^{v_{p'}}(\X_{u_{p'}})\right|
\leq&\prod_{i=1}^{p'}\prod_{j=1}^{v_i}\left(Eg(\X_{u_i})^{2q}\right)^{1/2q}\\
=&\prod_{i=1}^{p'}\left(Eg(\X_{1})^{2q}\right)^{v_i/2q}\\
=&Eg(\X_{1})^{2q}.
\end{align*}
Thus,
\begin{align*}
&\max_{\substack{v_1+\ldots+v_{p'}=2q\\v_1,\ldots,v_{p'}\geq1}}\sum_{u_1,\ldots,u_{p'}=1}^n\hspace*{-5mm}{'}~~
\left|Eg^{v_1}(\X_{u_1})\cdots g^{v_{p'}}(\X_{u_{p'}})\right|\\
\leq& n^{p'}Eg(\X_{1})^{2q}\\
\leq& Eg(\X_{1})^{2q}\left(n\sum_{k=0}^n \max_{1\leq i,j\leq p}(r^{(i,j)}(k))^m\right)^q,
\end{align*}
since $p'<q$. To handle the second case we have to ensure that Lemma \ref{lemma bardet} can be applied. If  $p'>q$ and $v_1+\ldots+v_{p'}=2q$ then it is impossible that all indices $v_1,\ldots,v_{p'}$ are greater than one. We separate those which are equal to one as follows
\begin{align*}
&\max_{\substack{v_1+\ldots+v_{p'}=2q\\v_1,\ldots,v_{p'}\geq1}}\sum_{u_1,\ldots,u_{p'}=1}^n\hspace*{-5mm}{'}~~
\left|Eg^{v_1}(\X_{u_1})\cdots g^{v_{p'}}(\X_{u_{p'}})\right|\\
=&\max_{\alpha_{\min}\leq\alpha\leq\alpha_{\max}}\max_{\substack{v_{\alpha+1}+\ldots+v_{p'}=2q\\v_{\alpha+1},\ldots,v_{p'}\geq2}}
\sum_{u_1,\ldots,u_{p'}=1}^n\hspace*{-5mm}{'}~~
\left| Eg(\X_{u_1})\cdots g(\X_{u_{\alpha}}) g^{v_{\alpha+1}}(\X_{u_{\alpha+1}})\cdots g^{v_{p'}}(\X_{u_{p'}})\right|.
\end{align*}
The numbers $\alpha_{\min}$ and $\alpha_{\max}$ describe the minimum and maximum number of indices that could be equal to one. More precisely,  $\alpha_{\min}=2p'-2q$ and
\begin{equation*}
\alpha_{\max}=\begin{cases}2q\hspace{10mm}&\textnormal{if }p'=2q\\p'-1\hspace{10mm}&\textnormal{otherwise.}\end{cases}
\end{equation*} 
For fixed $\alpha$ and $v_{\alpha+1},\ldots,v_{p'}$ we set $g_i(\cdot)=g(\cdot)$, if $1\leq i\leq\alpha$, and $g_i(\cdot)=g^{v_i}(\cdot)$, if $\alpha+1\leq i\leq p'$. Since $\max(v_{\alpha+1},\ldots,v_{p'})\leq q$, we have $g_i\in L^2(\N_p)$ for all $1\leq i\leq p'$. The Hermite rank of $g_1,\ldots,g_{\alpha}$ is at least $m$ and $r^{(i,j)}(k)\leq1/(4pq-1)<1/(pp'-1)$ for all $k\in\NN$. Therefore, the assumptions of Lemma \ref{lemma bardet} are satisfied with $\e=1/(4pq-1)$ and so
\begin{align*}
&\sum_{u_1,\ldots,u_{p'}=1}^n\hspace*{-5mm}{'}~~\left| Eg_1(\X_{u_1})\cdots  g_{p'}(\X_{u_{p'}})\right|\\
\leq&C(q,p',m,\alpha,p)\left(\prod_{i=1}^{p'}\|g_i\|_2\right)n^{p'-\alpha/2}\left(\sum_{k=0}^n \max_{1\leq i,j\leq p}(r^{(i,j)}(k))^m\right)^{\alpha/2}\\
\leq&C(q,p',m,\alpha,p)\left(1\vee \left(Eg(\X_1)^{2q}\right)^q\right)\left(n\sum_{k=0}^n \max_{1\leq i,j\leq p}(r^{(i,j)}(k))^m\right)^{q},
\end{align*}
since $p'-\alpha/2\leq p'-\alpha_{\min}/2=q$ and $\alpha/2\leq\alpha_{\max}/2\leq q$. With respect to \eqref{eq Taqqu77 two cases} we get
\begin{align*}
&E\left(\sum_{j=1}^ng(\X_j)\right)^{2q}\\
\leq &C\sum_{p'=1}^{2q}\max_{\substack{v_1+\ldots+v_{p'}=2q\\v_1,\ldots,v_{p'}\geq1}}\sum_{u_1,\ldots,u_{p'}=1}^n\hspace*{-5mm}{'}~~
\left|Eg^{v_1}(\X_{u_1})\cdots g^{v_{p'}}(\X_{u_{p'}})\right|\\
\leq&\begin{multlined}[t][\textwidth-4mm]C\left(n\sum_{k=0}^n \max_{1\leq i,j\leq p}(r^{(i,j)}(k))^m\right)^{q}
\Biggl(qEg(\X_1)^{2q}+\\
\sum_{p'=q+1}^{2q}\max_{\alpha_{\min}\leq\alpha\leq\alpha_{\max}}\max_{\substack{v_{\alpha+1}+\ldots+v_{p'}=2q\\v_{\alpha+1},\ldots,v_{p'}\geq2\\v_1=\cdots=v_{\alpha}=1}}C(q,p',m,\alpha,p)\left(1\vee \left(Eg(\X_1)^{2q}\right)^q\right)\Biggr)
\end{multlined}\\
\leq&C\left(1\vee \left(Eg(\X_1)^{2q}\right)^q\right)\left(n\sum_{k=0}^n \max_{1\leq i,j\leq p}(r^{(i,j)}(k))^m\right)^{q},
\end{align*}
where $C$ only depends on $p$, $q$ and $m$.
\end{proof}
To simplify the notation we set
\begin{equation*}
S_N(n,f)=d_N^{-1}\sum_{j=1}^n\left(f(\X_j)-Ef(\X_1)-\sum_{l_1+\ldots+l_p=m}\frac{J_{l_1,\ldots,l_p}(f)}{l_1!\cdots l_p!}H_{l_1,\ldots,l_p}(\X_j)\right)
\end{equation*}
and $S_N(n,g,h)=S_N(n,h)-S_N(n,g)$.
\begin{lemma}\label{lemma 2q-te Momente}
Let the assumptions of Theorem \ref{Theorem Limit Functions} be satisfied and assume further that $r^{(i,j)}(k)\leq 1/(4pq-1)$ for all $k\in\NN$, $1\leq i,j\leq p$. Then there exist positive constants $C$ and $\gamma$ such that for all $g,h\in\F$
\begin{equation*}
\left(Nd_N^{-1}\right)^rE\left(S_N(n,g,h)\right)^{2q}\leq C\left(\frac{n}{N}\right)N^{-\gamma}.
\end{equation*}
\end{lemma}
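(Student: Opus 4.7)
The plan is to rewrite $S_N(n,g,h)$ as a partial sum of a single function of Hermite rank at least $m+1$ and then invoke Lemma \ref{lemma Taqqu1977} with this upgraded rank. Setting $F := h-g$ and
\[
\tilde F(x) := F(x) - EF(\X_1) - \sum_{l_1+\cdots+l_p=m} \frac{J_{l_1,\ldots,l_p}(F)}{l_1!\cdots l_p!}\, H_{l_1,\ldots,l_p}(x),
\]
one has $S_N(n,g,h) = d_N^{-1} \sum_{j=1}^n \tilde F(\X_j)$, and by construction all Hermite coefficients of $\tilde F$ of order at most $m$ vanish, so $\tilde F$ has Hermite rank at least $m+1$. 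Moreover, $E\tilde F(\X_1)^{2q}$ is bounded uniformly in $g,h \in \F$: Cauchy--Schwarz gives $|J_{l_1,\ldots,l_p}(F)| \leq \|F\|_2 \sqrt{l_1!\cdots l_p!}$, which together with \eqref{C}, the uniform $L^2$-boundedness of $\F$, and the finiteness of Gaussian moments of the finitely many degree-$m$ Hermite polynomials produces a constant $C_0$ independent of $g$ and $h$.

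Applying Lemma \ref{lemma Taqqu1977} to $\tilde F$ then yields
\[
E\Bigl(\sum_{j=1}^n \tilde F(\X_j)\Bigr)^{2q} \leq C \Bigl(n \sum_{k=0}^{n} \max_{1\leq i,j\leq p} |r^{(i,j)}(k)|^{m+1}\Bigr)^q.
\]
Using $|r^{(i,j)}(k)| \leq C\, k^{-D} L(k)$ and Karamata's theorem, the inner sum is of order $n^{1-D(m+1)} L(n)^{m+1}$ when $D(m+1) < 1$, of order $1$ when $D(m+1) > 1$, and carries an additional logarithmic factor at the boundary $D(m+1)=1$. Combined with the well-known asymptotics $d_N \sim c\, N^{1-mD/2}L(N)^{m/2}$ and $(Nd_N^{-1})^r \sim c\, N^{rmD/2} L(N)^{-rm/2}$, and using $n \leq N$ to extract the factor $n/N$ from $n^{q(2-D(m+1))}$ respectively $n^q$ (which requires $q \geq 1$, ensured by \eqref{C}), one obtains in both regimes an upper bound of the form $(n/N)\, N^{\alpha}\, L(N)^{\beta}$.

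A direct computation shows $\alpha = \tfrac{D}{2}(mr-2q)$ in the regime $D(m+1) < 1$ and $\alpha = \tfrac{rmD}{2} - q(1-mD)$ in the regime $D(m+1) > 1$; each of these is strictly negative precisely under the corresponding branch of the assumption $2q > mDr/(1-mD) \vee mr$ in \eqref{C}, which is exactly why that condition has a two-sided form. The slowly varying factor $L(N)^{\beta}$ is absorbed into $N^{-\gamma}$ by choosing $\gamma$ slightly smaller than $-\alpha$, and the same argument handles the boundary case $D(m+1)=1$ since the extra logarithm loses against any positive power of $N$. The only genuine obstacle is this bookkeeping: matching the exponents produced by Lemma \ref{lemma Taqqu1977} and Karamata with the two branches of the condition on $q$ imposed in \eqref{C}, and verifying that the constants remain uniform in $g, h \in \F$.
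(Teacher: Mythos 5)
Your proposal is correct and follows essentially the same route as the paper: reduce $S_N(n,g,h)$ to a partial sum of the function $\tilde F$ of Hermite rank at least $m+1$ with uniformly bounded $2q$-th moments, apply Lemma \ref{lemma Taqqu1977}, use the Karamata-type bound on $n\sum_{k\leq n}(k^{-D}L(k))^{m+1}$ together with $d_N^2=\mathcal{O}(N^{2-mD}L^m(N))$, extract the factor $n/N$ via $n\leq N$, and check that the exponent of $N$ is negative under the two branches of \eqref{C}; your explicit regime split $D(m+1)<1$, $=1$, $>1$ and exponent computation match the paper's $\vee$-notation bookkeeping, and your verification of negativity exactly reproduces why \eqref{C} has its two-sided form.
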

\begin{proof}
For all $g,h\in\F$ the Hermite rank of 
\begin{equation*}
h(X_j) -g(X_j)-E(h(X_1) -g(X_1))-\sum_{l_1+\ldots+l_p=m}\frac{J_{l_1,\ldots,l_p}(h-g)}{l_1!\cdots l_p!}H_{l_1,\ldots,l_p}(X_j)
\end{equation*}
is at least $m+1$. By assumption \eqref{C} the $2q-th$ moment of these functions are uniformly bounded. Therefore, we can apply Lemma \ref{lemma Taqqu1977}. Remember that  $d_N^2\in\mathcal{O}(N^{2-mD}L^m(N))$, see \citet[Theorem 3.1]{Taqqu75}, 
 and that
\begin{equation*}
n\sum_{k=0}^{n}(k^{-D}L(k))^{m+1}\leq Cn^{1\vee (2-(m+1)D)}L'(n),
\end{equation*}
where $L'$ is slowly varying at infinity, see \citet[p. 1777]{Dehling}. For simplicity products of slowly varying functions in $n$ and $N$ will combined into $\L(n,N)$. Lemma \ref{lemma Taqqu1977} yields
\begin{align*}
E\left(S_N(n,g,h)\right)^{2q}&\leq Cd_N^{-2q}\left(n\sum_{k=0}^n \max_{1\leq i,j\leq p}(r^{(i,j)}(k))^{m+1}\right)^{q}\\
&\leq Cd_N^{-2q}\left(n\sum_{k=0}^n (k^{-D}L(k))^{m+1}\right)^{q}\\
&\leq Cd_N^{-2q}n^{q\vee q(2-(m+1)D)}\L(n,N)\\
&\leq CN^{q(mD-2)}n^{q\vee q(2-(m+1)D)}\L(n,N)\\
&\leq C\left(\frac{n}{N}\right)^{q\vee q(2-(m+1)D)} N^{q(mD-2)}N^{q\vee q(2-(m+1)D)}\L(n,N)\\
&\leq C\left(\frac{n}{N}\right)N^{q(mD-1)\vee -qD}\L(n,N).
\end{align*}
Using the lower bound for $2q$ from \eqref{C} we can conclude the proof, since
\begin{align*}
&(Nd_N^{-1})^rC\left(\frac{n}{N}\right)N^{q(mD-1)\vee -qD}\L(n,N)\\
\leq& CN^{mDr/2}\left(\frac{n}{N}\right)N^{q(mD-1)\vee -qD}\L(n,N)\\
\leq& C\left(\frac{n}{N}\right)N^{-\gamma}
\end{align*}
for an appropriate small $\gamma>0$.
\end{proof}
\begin{lemma}\label{lemma chaining function}
Let the assumptions of Theorem \ref{Theorem Limit Functions} be satisfied and assume further that $r^{(i,j)}(k)\leq 1/(4pq-1)$ for all $k\in\NN$, $1\leq i,j\leq p$. Then there exist positive constants $\rho$ and $C$ such that for all $n\leq N$ and $0<\e\leq1$,
\begin{equation*}
P^*\left(\sup_{f\in\F}\left|S_N(n,f)\right|>\e\right)\leq CN^{-\rho}\left(\left(\frac{n}{N}\right)\e^{-(2q+r+1)}+\left(\frac{n}{N}\right)^{2-mD}\right).
\end{equation*}
\end{lemma}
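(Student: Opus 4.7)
The plan is a bracketing--chaining argument that combines Lemma~\ref{lemma 2q-te Momente} with the entropy condition~\eqref{B}, together with a direct variance estimate for the residual.

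Let $\e_k=2^{-k}$ and, for each $k\ge 0$, fix a minimal cover of $\F$ by $N(\e_k)$ brackets $[l^{(k)}_j,u^{(k)}_j]\subset\F$. For $f\in\F$, let $\pi_k(f):=l^{(k)}_{j(f,k)}\in\F$ denote the lower bracket containing $f$ at level~$k$, so that $0\le f-\pi_k(f)\le\Delta_k^{(j(f,k))}$ with $\Delta_k^{(j)}:=u^{(k)}_j-l^{(k)}_j$ and $\|\Delta_k^{(j)}\|_2\le\e_k$. For a truncation index $K$ to be optimised below, decompose
\begin{equation*}
S_N(n,f)=S_N(n,\pi_0 f)+\sum_{k=0}^{K-1}\bigl(S_N(n,\pi_{k+1}f)-S_N(n,\pi_k f)\bigr)+\bigl(S_N(n,f)-S_N(n,\pi_K f)\bigr).
\end{equation*}

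For the chain, at level $k$ there are at most $N(\e_{k+1})^2$ distinct pairs $(\pi_k f,\pi_{k+1}f)$, and Lemma~\ref{lemma 2q-te Momente} uniformly bounds the $2q$-th moment of each increment by $C(n/N)N^{-\gamma}(Nd_N^{-1})^{-r}$. With geometric thresholds $t_k\propto\e\cdot 2^{-k/(2q)}$ arranged so that $\sum_k t_k\le\e/3$, Markov's inequality together with the dyadic bound $N(\e_k)^2\le C\e_k^{-r}$ coming from~\eqref{B} delivers a chain failure probability of order $(n/N)N^{-\gamma}(Nd_N^{-1})^{-r}\e^{-2q}\e_K^{-(r+1)}$. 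Calibrating $\e_K\sim\e$ produces the $\e^{-(2q+r+1)}$ factor, while $d_N^2\sim N^{2-mD}$ makes $N^{-\gamma}(Nd_N^{-1})^{-r}\sim N^{-\gamma-rmD/2}$ a strictly negative power of $N$, supplying the $N^{-\rho}$ gain. The term $|S_N(n,\pi_0 f)|$ involves only $N(1)$ functions and is absorbed by the same argument.

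For the residual, using that $f-\pi_K f$ has Hermite rank at least $m$ with all non-trivial Hermite coefficients bounded by $C\|f-\pi_K f\|_2\le C\e_K$, the variance of $S_N(n,f-\pi_K f)=S_N(n,f)-S_N(n,\pi_K f)$ satisfies $\operatorname{Var}(S_N(n,f-\pi_K f))\le C\e_K^2(n/N)^{2-mD}$ up to slowly varying factors. Uniformity over $f$ is obtained by sandwiching
\begin{equation*}
\sup_f|S_N(n,f)-S_N(n,\pi_K f)|\le\max_j\Bigl|d_N^{-1}\!\sum_{i=1}^n\!(\Delta_K^{(j)}(\X_i)-E\Delta_K^{(j)})\Bigr|+2\max_j d_N^{-1}n E\Delta_K^{(j)}+C\e_K d_N^{-1}\!\max_{l_1+\cdots+l_p=m}\!\Bigl|\sum_{i=1}^n\!H_{l_1,\ldots,l_p}(\X_i)\Bigr|,
\end{equation*}
and then applying Chebyshev with a union bound over the $N(\e_K)\le C\e_K^{-r}$ brackets; calibrating $\e_K\sim\e$ delivers the $(n/N)^{2-mD}N^{-\rho}$ contribution once the polynomial loss $\e^{-r}$ is absorbed into the $N^{-\gamma-rmD/2}$ gain, which is legitimate for $\e\ge N^{-\delta}$ with $\delta>0$ small; in the complementary regime the first term of the asserted bound already exceeds one.

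The main obstacle is the joint calibration of the chain thresholds $t_k$ and the truncation level $K$: the decay rate $2^{-k/(2q)}$ is precisely what converts the dyadic entropy bound $\sum_k\e_k^r N(\e_k)^2<\infty$ into the exponent $\e^{-(2q+r+1)}$ via Markov, and $\e_K\sim\e$ must simultaneously make the residual variance estimate collapse to the advertised $(n/N)^{2-mD}$ rate. Using only the uniform, non-$\|g-h\|_2$-scaled moment bound of Lemma~\ref{lemma 2q-te Momente} is what forces the comparatively crude exponent $\e^{-(2q+r+1)}$ compared with the sharper constants that would be available under independence.
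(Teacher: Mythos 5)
Your overall architecture (a dyadic bracketing chain driven by Lemma~\ref{lemma 2q-te Momente}, plus a residual controlled through the bracket width and a second-moment bound for the Hermite sums) is the same as the paper's, but your calibration $\e_K\sim\e$ breaks the argument at the residual. In your sandwich the deterministic term $2\max_j d_N^{-1}nE\Delta_K^{(j)}$ is only bounded by $2nd_N^{-1}\e_K$; since $d_N\sim N^{1-mD/2}L^{m/2}(N)$, for $n$ of order $N$ this is of order $N^{mD/2}\e_K$ up to slowly varying factors, so with $\e_K\sim\e$ it exceeds any fixed fraction of $\e$ and the threshold splitting collapses. This is precisely why the paper truncates much deeper, at $K=\lceil\log_2(8Nd_N^{-1}/\e)\rceil$, i.e.\ $\e_K\asymp\e d_N/N$, which forces the bias below $\e/4$. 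The stochastic part of your residual also cannot deliver the advertised $N^{-\rho}$: the bracket width $\Delta_K^{(j)}$ has Hermite rank possibly exactly $m$, so Chebyshev for $d_N^{-1}\sum_{i\le n}(\Delta_K^{(j)}(\X_i)-E\Delta_K^{(j)})$ at a threshold proportional to $\e$ gives only $C\e_K^2\e^{-2}(n/N)^{2-mD}$ times slowly varying factors, which for $\e_K\sim\e$ is $C(n/N)^{2-mD}$ with no negative power of $N$, and the union bound over $N(\e_K)$ brackets only worsens this; your proposal to absorb the loss ``into the $N^{-\gamma-rmD/2}$ gain'' conflates the chain term with the residual term (the latter carries no such factor), and the fallback that the first term of the asserted bound exceeds one is not valid uniformly over $n\le N$. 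The paper avoids all of this by applying the reduced (Hermite rank $\ge m+1$) moment bound of Lemma~\ref{lemma 2q-te Momente} to $S_N(n,l^{(K)}_{i_K(f)},u^{(K)}_{i_K(f)})$, and by attaching the tiny factor $2^{-K}\asymp\e d_N/N$ to the Hermite sums before using Chebyshev, which is exactly where the $(n/N)^{2-mD}N^{-mD+\lambda}$ contribution comes from.

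Even if you repair the truncation by taking $\e_K\asymp\e d_N/N$, your geometric thresholds $t_k\propto\e\,2^{-k/(2q)}$ are too aggressive: Markov at level $k$ then carries a factor $2^{k}$ on top of $N(\e_k)^2\le C2^{rk}$, so the chain sum produces $2^{(r+1)K}\asymp(Nd_N^{-1}\e^{-1})^{r+1}$, of which the factor $(Nd_N^{-1})^{-r}$ built into Lemma~\ref{lemma 2q-te Momente} cancels only $(Nd_N^{-1})^{r}$; the leftover $Nd_N^{-1}\sim N^{mD/2}$ is a positive power of $N$ that the gain $N^{-\gamma}$ need not dominate, since assumption \eqref{C} guarantees $\gamma>0$ but not $\gamma>mD/2$. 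The paper therefore uses the slowly decaying thresholds $\e/(k+3)^2$, so that the chain costs only $2^{rK}(K+3)^{4q}$, with $2^{rK}$ exactly cancelled by $(Nd_N^{-1})^{-r}$ and $(K+3)^{4q}\le C\e^{-1}N^{\delta}$ accounting for the final exponent $\e^{-(2q+r+1)}$. Both halves of your calibration --- the truncation depth and the threshold decay --- have to be changed for the proof to close.
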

\begin{proof}
For each $f\in\F$ and each $k\leq K$, where $K$ will be specified in \eqref{K}, we can find functions $l_{i_k(f)}^{(k)}$ and $u_{i_k(f)}^{(k)}$ such that 
$l_{i_k(f)}^{(k)}\leq f\leq u_{i_k(f)}^{(k)}$ and $\|u_{i_k(f)}^{(k)}-l_{i_k(f)}^{(k)}\|_2\leq 2^{-k}$. Using the lower functions, we get the telescoping sum
\begin{align*}
S_N(n,f)=&S_N(n,l_{i_0(f)}^{(0)})+S_N(n,l_{i_0(f)}^{(0)},l_{i_1(f)}^{(1)})+S_N(n,l_{i_1(f)}^{(1)},l_{i_2(f)}^{(2)})+\\
&\ldots+S_N(n,l_{i_{K-1}(f)}^{(K-1)},l_{i_K(f)}^{(K)})+S_N(n,l_{i_K(f)}^{(K)},f).
\end{align*}
The last term can be handled as follows
\begin{align*}
&{}\left|S_N(n,l_{i_K(f)}^{(K)},f)\right|\\
=&\begin{multlined}[t][\textwidth-4mm]\Biggl|d_N^{-1}\sum_{j=1}^n\Biggl(f(\X_j)-l_{i_K(f)}^{(K)}(\X_j)-E\left(f(\X_1)-l_{i_K(f)}^{(K)}(\X_1)\right)\\
-\sum_{l_1+\ldots+l_p=m}\frac{J_{l_1,\ldots,l_p}(f)-J_{l_1,\ldots,l_p}(l_{i_K(f)}^{(K)})}{l_1!\cdots l_p!}H_{l_1,\ldots,l_p}(\X_j)\Biggr)\Biggr|\end{multlined}\\
\leq&\begin{multlined}[t][\textwidth-4mm]d_N^{-1}\sum_{j=1}^n\left(f(\X_j)-l_{i_K(f)}^{(K)}(\X_j)+E\left(f(\X_1)-l_{i_K(f)}^{(K)}(\X_1)\right)\right)\\
+\sum_{l_1+\ldots+l_p=m}\left|\frac{E\left(\left(f(\X_1)-l_{i_K(f)}^{(K)}(\X_1)\right)H_{l_1,\ldots,l_p}(\X_1)\right)}{l_1!\cdots l_p!}\right|
d_N^{-1}\left|\sum_{j=1}^nH_{l_1,\ldots,l_p}(\X_j)\right|\end{multlined}\\
\leq&\begin{multlined}[t][\textwidth-4mm]d_N^{-1}\sum_{j=1}^n\left(u_{i_K(f)}^{(K)}(\X_j)-l_{i_K(f)}^{(K)}(\X_j)+
E\left(u_{i_K(f)}^{(K)}(\X_1)-l_{i_K(f)}^{(K)}(\X_1)\right)\right)\\
+\hspace{-3pt}\sum_{l_1+\ldots+l_p=m}\left|\frac{E\left(\left|u_{i_K(f)}^{(K)}(\X_1)-l_{i_K(f)}^{(K)}(\X_1)\right|\cdot\left|H_{l_1,\ldots,l_p}(\X_1)\right|\right)}
{l_1!\cdots l_p!}\right|d_N^{-1}\left|\sum_{j=1}^nH_{l_1,\ldots,l_p}(\X_j)\right|\end{multlined}\\
\leq&\begin{multlined}[t][\textwidth-4mm]S_N(n,l_{i_K(f)}^{(K)},u_{i_K(f)}^{(K)})+2nd_N^{-1}E\left(u_{i_K(f)}^{(K)}(\X_1)-l_{i_K(f)}^{(K)}(\X_1)\right)\\
+2\cdot\hspace{-3pt}\sum_{l_1+\ldots+l_p=m}\left|\frac{E\left(\left|u_{i_K(f)}^{(K)}(\X_1)-l_{i_K(f)}^{(K)}(\X_1)\right|\cdot\left|H_{l_1,\ldots,l_p}(\X_1)\right|\right)}
{l_1!\cdots l_p!}\right|\\ \cdot d_N^{-1}\left|\sum_{j=1}^nH_{l_1,\ldots,l_p}(\X_j)\right|\end{multlined}\\
\leq& S_N(n,l_{i_K(f)}^{(K)},u_{i_K(f)}^{(K)})+2nd_N^{-1}2^{-K} +2\cdot2^{-K}d_N^{-1}\cdot\hspace{-3pt}\sum_{l_1+\ldots+l_p=m}\left|\sum_{j=1}^n H_{l_1,\ldots,l_p}(\X_j)\right|,
\end{align*}
by using Cauchy-Schwarz inequality. Since $\e/2+\sum_{k=0}^K\e/(k+3)^2<\e$ for all $K\in\NN$, we have
\begin{align*}
&P^*\left(\sup_{f\in\F}\left|S_N(n,f)\right|>\e\right)\\
\leq &P^*\left(\max_{f\in\F}\left|S_N(n,l_{i_0(f)}^{(0)})\right|>\e/9\right)\\
&+P^*\left(\max_{f\in\F}\left|S_N(n,l_{i_0(f)}^{(0)},l_{i_1(f)}^{(1)})\right|>\e/16\right)+\ldots\\
&+P^*\left(\max_{f\in\F}\left|S_N(n,l_{i_{K-1}(f)}^{(K-1)},l_{i_K(f)}^{(K)})\right|>\e/(K+3)^2\right)\\
&+P^*\left(\max_{f\in\F}\left|S_N(n,l_{i_K(f)}^{(K)},u_{i_K(f)}^{(K)})\right|>\e/(K+4)^2\right)\\
&+P\left(2\cdot2^{-K}d_N^{-1}\cdot\sum_{l_1+\ldots+l_p=m}\left|\sum_{j=1}^n H_{l_1,\ldots,l_p}(\X_j)\right|>\e/2-2nd_N^{-1}2^{-K}\right). \numberthis\label{Diskretisierung}
\end{align*}
Set
\begin{align}
M=&\left|\left\{(l_1,\ldots,l_p)\in\NN^p:l_1+\ldots l_p=m\right\}\right|=\binom{m+p-1}{m}\label{eq multivariate M} \\
K=&\left\lceil\log_2\left(\frac{8Nd_N^{-1}}{\e}\right)\right\rceil\label{K}
\end{align}
and let $N_k$ be the bracketing number with respect to $2^{-k}$. Applying Lemma \ref{lemma 2q-te Momente} yields
\begin{align*}
&P^*\left(\max_{f\in\F}\left|S_N(n,l_{i_{k-1}(f)}^{(k-1)},l_{i_k(f)}^{(k)})\right|>\e/(k+3)^2\right)\\
\leq&\sum_{s=1}^{N_{k-1}}\sum_{u=1}^{N_{k}}P\left(\left|S_N(n,l_s^{(k-1)},l_u^{(k)})\right|>\e/(k+3)^2\right)\\
\leq&\sum_{s=1}^{N_{k-1}}\sum_{u=1}^{N_{k}}(k+3)^{4q}\e^{-2q}E\left|S_N(n,l_s^{(k-1)},l_u^{(k)})\right|^{2q}\\
\leq&C(k+3)^{4q}N_k^2\e^{-2q}(Nd_N^{-1})^{-r}\left(\frac{n}{N}\right)N^{-\gamma}
\end{align*}
for $1\leq k\leq K$. In the same way we get
\begin{align*}
&P^*\left(\max_{f\in\F}\left|S_N(n,l_{i_0(f)}^{(0)})\right|>\e/9\right)\\
\leq&C3^{4q}N_1\e^{-2q}(Nd_N^{-1})^{-r}\left(\frac{n}{N}\right)N^{-\gamma}\\
\intertext{and}
&P^*\left(\max_{f\in\F}\left|S_N(n,l_{i_K(f)}^{(K)},u_{i_K(f)}^{(K)})\right|>\e/(K+4)^2\right)\\
\leq&C(K+4)^{4q}N_{K}\e^{-2q}(Nd_N^{-1})^{-r}\left(\frac{n}{N}\right)N^{-\gamma}.
\end{align*}
Let $M$ be the quantity given by \eqref{eq multivariate M}. Since \eqref{K} implies $(\e/4)^{-2}<2^{2K-2}N^{-2}d_N^{2}$ we have
\begin{align*}
&P\left(2\cdot2^{-K}d_N^{-1}\cdot\sum_{l_1+\ldots+l_p=m}\left|\sum_{j=1}^n H_{l_1,\ldots,l_p}(\X_j)\right|>\e/2-2nd_N^{-1}2^{-K}\right)\\
\leq&P\left(d_N^{-1}\cdot\sum_{l_1+\ldots+l_p=m}\left|\sum_{j=1}^n H_{l_1,\ldots,l_p}(\X_j)\right|>\frac{2^{K-1}\e}{4}\right)\\
\leq&\sum_{l_1+\ldots+l_p=m}P\left(d_N^{-1}\left|\sum_{j=1}^n H_{l_1,\ldots,l_p}(\X_j)\right|>\frac{2^{K-1}\e}{4M}\right)\\
\leq&d_N^{-2}\sum_{l_1+\ldots+l_p=m}E\left|\sum_{j=1}^nH_{l_1,\ldots,l_p}(\X_j)\right|^2\left(\frac{\e}{4}\right)^{-2}2^{-2K+2}M^2\notag\\
\leq&Cd_N^{-2}n^{2-mD}L(bn)^mN^{-2}d_N^2\phantom{\left(\frac{\varepsilon}{4}\right)^{-2}}\notag\\
\leq&C\left(\frac{n}{N}\right)^{2-mD}\left(\frac{L(bn)}{L(N)}\right)^mN^{-mD}L^m(N)\notag\\
\leq&C\left(\frac{n}{N}\right)^{2-mD}N^{-mD+\lambda}
\end{align*}
for any $\lambda>0$. Thus, assumption \eqref{B} and \eqref{K} can be used to bound \eqref{Diskretisierung} finally in the following way
\begin{align*}
&P^*\left(\sup_{f\in\F}\left|S_N(n,f)\right|>\e\right)\\
\leq&C\e^{-2q}(Nd_N^{-1})^{-r}\left(\frac{n}{N}\right)N^{-\gamma}\sum_{k=0}^K(k+3)^{4q}N_k^2
+C\left(\frac{n}{N}\right)^{2-mD}N^{-mD+\lambda}\\
=&C\e^{-2q}(Nd_N^{-1})^{-r}\left(\frac{n}{N}\right)N^{-\gamma}\sum_{k=0}^K2^{-rk}2^{rk}(k+3)^{4q}N_k^2
+C\left(\frac{n}{N}\right)^{2-mD}N^{-mD+\lambda}\\
\leq&C\e^{-2q}(Nd_N^{-1})^{-r}\left(\frac{n}{N}\right)N^{-\gamma}(2^{K})^r(K+3)^{4q}\sum_{k=0}^{\infty}2^{-rk}N_k^2
+C\left(\frac{n}{N}\right)^{2-mD}N^{-mD+\lambda}\\
\leq&C\e^{-2q}(Nd_N^{-1})^{-r}\left(\frac{n}{N}\right)N^{-\gamma}\e^{-r}(Nd_N^{-1})^{r}(K+3)^{4q}
+C\left(\frac{n}{N}\right)^{2-mD}N^{-mD+\lambda}\\
\leq&C\left(\frac{n}{N}\right)N^{-\gamma}(K+3)^{4q}\e^{-(2q+r)}
+C\left(\frac{n}{N}\right)^{2-mD}N^{-mD+\lambda}\\
\leq&CN^{-\rho}\left(\left(\frac{n}{N}\right)\e^{-(2q+r+1)}+\left(\frac{n}{N}\right)^{2-mD}\right)
\end{align*}
for a sufficient small $\rho$. Note,  for the last inequality we use $(K+3)^{4q}\leq C\e^{-1}N^{\delta}$ for any $\delta>0$.
\end{proof}
\begin{proof}[Proof of Theorem \ref{Theorem Reduction Functions}]
If the cross-covariance function satisfies $r^{(i,j)}(k)\leq 1/(4pq-1)$ for all $k\in\NN$ and $1\leq i,j\leq p$ then Lemma \ref{lemma chaining function} holds and Theorem \ref{Theorem Reduction Functions} follows by adapting the proof given by \citet[p. 1781]{Dehling}. The general case can be treated as done by \citet[p. 225]{Taqqu77}. Choose $b\in\NN$ such that $r^{(i,j)}(k)\leq 1/(4pq-1)$ for all $k\geq b$ and set 
\begin{equation*}
\tilde{f}(\X_j)=f(\X_j)-Ef(\X_1)-\sum_{l_1+\ldots+l_p=m}\frac{J_{l_1,\ldots,l_p}(f)}{l_1!\cdots l_p!}H_{l_1,\ldots,l_p}(\X_j).
\end{equation*}
We can decompose $S_N(n,f)$ as follows
\begin{align*}
\left|S_N(n,f)\right|\leq&d_N^{-1}\left(\sum_{j=1}^{j^*}\left|\sum_{k=0}^{\lfloor\frac{n}{b}\rfloor}\tilde{f}(\X_{j+kb})\right|
+\sum_{j=j^*+1}^{b}\left|\sum_{k=0}^{\lfloor\frac{n}{b}\rfloor-1}\tilde{f}(\X_{j+kb})\right|\right)\\
\leq&\sum_{j=1}^b\max_{1\leq l\leq N/b}d_N^{-1}\left|\sum_{k=0}^l\tilde{f}(\X_{j+kb})\right|,
\end{align*}
where $j^*=b$, if $n/b$ is an integer, and $j^*=n-\lfloor n/b\rfloor b$ otherwise. Taking the supremum of both sides yields
\begin{equation*}
\max_{1\leq n\leq N}\sup_{f\in\F}\left|S_N(n,f)\right|\leq\sum_{j=1}^b\max_{1\leq l\leq N}\sup_{f\in\F}d_N^{-1}\left|\sum_{k=1}^l
\tilde{f}(\X_{j+(k-1)b})\right|.
\end{equation*}
For any $j$ the series $(\X_{j+(k-1)b})_{k\geq1}$ satisfies the assumption of Lemma \ref{lemma chaining function}. Therefore,
\begin{align*}
&P^*\left(\max_{1\leq n\leq N}\sup_{f\in\F}\left|S_N(n,f)\right|>\e\right)\\
\leq&\sum_{j=1}^bP^*\left(\max_{1\leq l\leq N}\sup_{f\in\F}d_N^{-1}\left|\sum_{k=1}^l\tilde{f}(\X_{j+(k-1)b})\right|>\e/b\right)\\
\leq&CN^{-\kappa}\left(1+\e^{-(2q+r+1)}\right).
\end{align*}
\end{proof}
\begin{lemma}\label{lemma Runge}
For all $m\in\NN$ and $a_1,\ldots,a_p\in\RR$ with $a_1^2+\ldots+a_p^2=1$ we have
\begin{equation}\label{eq Runge}
H_m\left(\sum_{j=1}^pa_jx_j\right)=\sum_{m_1+\ldots+m_p=m}\frac{m!}{m_1!\cdots m_p!}\prod_{j=1}^pa_j^{m_j}H_{m_j}(x_j).
\end{equation}
\end{lemma}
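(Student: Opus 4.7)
The plan is to prove this classical Hermite identity via the exponential generating function
\begin{equation*}
e^{tx - t^2/2} = \sum_{m=0}^\infty \frac{t^m}{m!} H_m(x),
\end{equation*}
which characterizes the Hermite polynomials $H_m$. Since two formal power series in $t$ agree if and only if all their coefficients coincide, it suffices to verify the generating-function version of \eqref{eq Runge} and then read off the coefficient of $t^m$.

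First I would substitute $x = \sum_{j=1}^p a_j x_j$ into the generating function, obtaining
\begin{equation*}
e^{t \sum_{j=1}^p a_j x_j - t^2/2} = \sum_{m=0}^\infty \frac{t^m}{m!} H_m\left(\sum_{j=1}^p a_j x_j\right).
\end{equation*}
The crucial step is to rewrite the exponent using the normalization $a_1^2 + \cdots + a_p^2 = 1$, so that $t^2/2 = \sum_{j=1}^p (ta_j)^2/2$. This lets the left-hand side factor as
\begin{equation*}
\prod_{j=1}^p e^{(ta_j) x_j - (ta_j)^2/2} = \prod_{j=1}^p \sum_{m_j=0}^\infty \frac{(ta_j)^{m_j}}{m_j!} H_{m_j}(x_j),
\end{equation*}
where in the last step I apply the generating function identity once for each factor.

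Expanding the product and collecting terms in powers of $t$ yields
\begin{equation*}
\prod_{j=1}^p \sum_{m_j=0}^\infty \frac{(ta_j)^{m_j}}{m_j!} H_{m_j}(x_j) = \sum_{m=0}^\infty t^m \sum_{m_1+\cdots+m_p = m} \prod_{j=1}^p \frac{a_j^{m_j}}{m_j!} H_{m_j}(x_j).
\end{equation*}
Comparing the coefficient of $t^m$ with that on the other side and multiplying by $m!$ gives \eqref{eq Runge}. The only real point in the argument is the identity $\sum (ta_j)^2/2 = t^2/2$, which requires the hypothesis $\sum a_j^2 = 1$; everything else is a routine manipulation of formal power series, and no convergence issues arise since each fixed power of $t$ involves only finitely many terms. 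I do not foresee a significant obstacle here.
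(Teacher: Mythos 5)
Your proof is correct, but it follows a genuinely different route from the paper. You argue via the exponential generating function $e^{tx-t^2/2}=\sum_{m\geq0}H_m(x)t^m/m!$: substituting $x=\sum_j a_jx_j$, using $\sum_j a_j^2=1$ to split $t^2/2=\sum_j(ta_j)^2/2$, factoring the exponential, expanding each factor, and comparing coefficients of $t^m$. The paper instead proceeds by induction on $m$, showing via the recurrence $H_n'(x)=nH_{n-1}(x)$ that both sides of the identity have identical partial derivatives (so they agree up to an additive constant), and then fixes the constant by evaluating at $x_1=\cdots=x_p=0$, using $H_m(0)=(-1)^{m/2}(m-1)!!$ together with the multinomial theorem applied to $\bigl(\sum_j a_j^2\bigr)^{m/2}$. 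Your argument is shorter and more conceptual: it makes visible that the identity is nothing but the factorization of the Gaussian exponential, with the normalization $\sum_j a_j^2=1$ entering at exactly one point, and all series manipulations are legitimate since each power of $t$ collects only finitely many terms (indeed the series are entire in $t$, so coefficient comparison is also justified analytically). The paper's argument is more elementary in its ingredients, using only the derivative recurrence and the value of $H_m$ at zero, at the cost of a separate computation to pin down the constant. The only small point you should make explicit is that the generating function you start from is consistent with the paper's Rodrigues-type definition $H_k(x)=(-1)^ke^{x^2/2}\frac{d^k}{dx^k}e^{-x^2/2}$; this follows in one line by Taylor-expanding $e^{-(x-t)^2/2}$ in $t$, so it is not a gap, just a dependency worth stating.
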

Since we could not find a proof for this well known result in the literature, see e.g \citet[p. 2255]{Arcones}, \citet[p. 113]{Beran}, we give one here.  
\begin{proof}
We first show that all partial derivatives are equal by using induction. For $m=1$ this is obvious. Since $H_n'(x)=nH_{n-1}(x)$ we have
\begin{align*}
&\frac{\partial}{\partial x_1}
\left(\sum_{m_1+\ldots+m_p=m+1}\frac{(m+1)!}{m_1!\cdots m_p!}\prod_{j=1}^pa_j^{m_j}H_{m_j}(x_j)\right)\\
=&\sum_{m_1+\ldots+m_p=m+1}\frac{(m+1)!}{(m_1-1)!\cdots m_p!}a_1^{m_1}H_{m_1-1}(x_1)\prod_{j=2}^pa_j^{m_j}H_{m_j}(x_j)\\
=&a_1(m+1)\sum_{m_1+\ldots+m_p=m}\frac{(m)!}{m_1!\cdots m_p!}\prod_{j=1}^pa_j^{m_j}H_{m_j}(x_j)\\
=&a_1(m+1)H_m\left(\sum_{j=1}^pa_jx_j\right)\\
=&\frac{\partial}{\partial x_1}H_{m+1}\left(\sum_{j=1}^pa_jx_j\right)
\end{align*}
The other derivatives can be handled similarly. Therefore, \eqref{eq Runge} holds up to a constant. Let $x_1=\ldots=x_p=0$. If $m$ is odd, both sides of \eqref{eq Runge} are equal to zero and thus the constant vanishes. For even $m$ we have $H_m(0)=(-1)^{m/2}(m-1)!!$, where
\begin{equation*}
(m-1)!!:=(m-1)(m-3)\cdots3\cdot1=\frac{m!}{2^{m/2}(m/2)!}.
\end{equation*}
This yields
\begin{align*}
&\sum_{m_1+\ldots+m_p=m}\frac{m!}{m_1!\cdots m_p!}\prod_{j=1}^pa_j^{m_j}H_{m_j}(0)\\
=&\sum_{2m_1+\ldots+2m_p=m}\frac{m!}{(2m_1)!\cdots (2m_p)!}\prod_{j=1}^p(-1)^{m_j}a_j^{2m_j}(2m_j-1)!!\\
=&(-1)^{m/2}\sum_{2m_1+\ldots+2m_p=m}\frac{m!}{(2m_1)!!\cdots (2m_p)!!}\prod_{j=1}^p(a_j^2)^{m_j}\\
=&(-1)^{m/2}\sum_{m_1+\ldots+m_p=m/2}\frac{m!}{2^{m/2}m_1!\cdots m_p!}\prod_{j=1}^p(a_j^2)^{m_j}\\
=&(-1)^{m/2}\sum_{m_1+\ldots+m_p=m/2}\frac{(m-1)!!2^{m/2}(m/2)!}{2^{m/2}m_1!\cdots m_p!}\prod_{j=1}^p(a_j^2)^{m_j}\\
=&(-1)^{m/2}(m-1)!!\sum_{m_1+\ldots+m_p=m/2}\frac{(m/2)!}{m_1!\cdots m_p!}\prod_{j=1}^p(a_j^2)^{m_j}\\
=&H_m(0)\left(\sum_{j=1}^p a_j^2\right)^{m/2}\\
=&H_m(0).
\end{align*}
\end{proof}
\begin{proof}[Proof of Theorem \ref{Theorem Limit Functions}]
By Theorem \ref{Theorem Reduction Functions} it is enough to study the limit of
\begin{equation*}
\left\{d_N^{-1}\sum_{j=1}^{\lfloor Nt\rfloor}\sum_{l_1+\ldots+l_p=m}
\frac{J_{l_1,\ldots,l_p}(f)}{l_1!\cdots l_p!}H_{l_1,\ldots,l_p}(\X_j):(f,t)\in\F\times[0,1]\right\}.
\end{equation*}
We first show that in the current situation Lemma \ref{lemma Runge} can be applied.
By independence of multivariate monomials of degree $m$, we can find for all $k_1,\ldots,k_p$ satisfying $k_1+\ldots+k_p=m$, real numbers $a_{k_1,\ldots,k_p}^{(1)},\ldots a_{k_1,\ldots,k_p}^{(p)}$, such that the $\binom{m+p-1}{m}\times\binom{m+p-1}{m}$ matrix
\begin{equation*}
A=\left(\prod_{i=1}^p(a_{k_1,\ldots,k_p}^{(i)})^{m_i}\right)_{\substack{m_1+\ldots+m_p=m\\k_1+\ldots+k_p=m}}
\end{equation*}
is invertible. More precisely, sorting the tuples $(m_1,\ldots,m_p)$, $m_1+\ldots+m_p=m$ and $(k_1,\ldots,k_p)$, $k_1+\ldots+k_p=m$ lexicographically, the entry $a_{q_1,q_2}$ of $A$ is given by $\prod_{i=1}^p(a_{k_1,\ldots,k_p}^{(i)})^{m_i}$ with respect to this order.
After normalization we have $\sum_{i=1}^p (a_{k_1,\ldots,k_p}^{(i)})^2=1$. For a suitable diagonal matrix $M$ of the same size define $B:=MA^{-1}$ such that
\begin{align*}
&\sum_{k_1+\ldots+k_p=m}b(k_1,\ldots,k_p,l_1,\ldots,l_p)(a_{k_1,\ldots,k_p}^{(1)})^{m_1}\cdots
(a_{k_1,\ldots,k_p}^{(p)})^{m_p}\\
&=\begin{cases}(m!)^{-1}\prod_{i=1}^p l_i!~~~~&\textnormal{ if }(m_1,\ldots,m_p)=(l_1,\ldots,l_p)\\0
&\textnormal{ otherwise,}\end{cases}\numberthis\label{VorbereitungRunge}
\end{align*}
where $b(k_1,\ldots,k_p,l_1,\ldots,l_p)$ denotes that entry of $B$ whose row and column number is given by the lexicographical order of $k_1,\ldots,k_p$ and $l_1,\ldots,l_p$.
Using Lemma \ref{lemma Runge} together with \eqref{VorbereitungRunge} we get
\begin{align*}
&\sum_{\substack{l_1+\ldots+l_p=m\\ k_1+\ldots+k_p=m}}J_{l_1,\ldots,l_p}(f)\left(\prod_{i=1}^p(l_i!)^{-1}\right)
b(k_1,\ldots,k_p,l_1,\ldots,l_p)H_m\left(\sum_{i=1}^p a_{k_1,\ldots,k_p}^{(i)}X_j^{(i)}\right)\\
=&\begin{multlined}[t][\textwidth-20mm]
\sum_{\substack{l_1+\ldots+l_p=m\cr k_1+\ldots+k_p=m}}\sum_{m_1+\ldots+m_p=m}
J_{l_1,\ldots,l_p}(f)\left(\prod_{i=1}^p(l_i!)^{-1}\right)b(k_1,\ldots,k_p,l_1,\ldots,l_p)\\
\times m!\prod_{i=1}^p(m_i!)^{-1}\left(a_{k_1,\ldots,k_p}^{(i)}\right)^{m_i}H_{m_i}\left(X_j^{(i)}\right)
\end{multlined}\\
=&\sum_{l_1+\ldots+l_p=m}J_{l_1,\ldots,l_p}(f)\prod_{i=1}^p(l_i!)^{-1}H_{l_i}\left(X_j^{(i)}\right)
\end{align*}
Define for simplicity
\begin{equation*}
I(f;k_1,\ldots,k_p):=\sum_{l_1+\ldots+l_p=m}J_{l_1,\ldots,l_p}(f)\left(\prod_{i=1}^p(l_i!)^{-1}\right)
b(k_1,\ldots,k_p,l_1,\ldots,l_p),
\end{equation*}
whereby the following identity holds
\begin{align*}
&d_N^{-1}\sum_{j=1}^{\lfloor Nt\rfloor}\sum_{l_1+\ldots+l_p=m}J_{l_1,\ldots,l_p}(f)\prod_{i=1}^p(l_i!)^{-1}H_{l_i}\left(X_j^{(i)}\right)\\
=&d_N^{-1}\sum_{j=1}^{\lfloor Nt\rfloor}\sum_{k_1+\ldots+k_p=m}I(f;k_1,\ldots,k_p)H_m\left(\sum_{i=1}^p 
a_{k_1,\ldots,k_p}^{(i)}X_j^{(i)}\right).
\end{align*}
As stated by \citet[(3.6)]{Arcones}, there exist Hermitian Gaussian random measures $\tilde{B}^{(1)},\ldots, \tilde{B}^{(p)}$, such that
\begin{multline*}
\left\{d_N^{-1}\sum_{j=1}^{\lfloor Nt\rfloor}(H_m(X_j^{(1)}),\ldots,H_m(X_j^{(p)})):0\leq t\leq 1\right\}\\
\xrightarrow{~~~~d~~~~}
\left\{(Z_m^{(1)}(t),\ldots,(Z_m^{(p)}(t)):0\leq t\leq 1\right\},
\end{multline*}
in $(D[0,1])^p$, where the processes $(Z_m^{(j)}(t))$ are up to a constant the Hermite processes defined by \eqref{eq Hermite Prozess Darstellung} with respect to $\tilde{B}^{(j)}$. More precisely, if $\Z=(Z^{(1)},\ldots,Z^{(p)})$ is the vector-valued random spectral measure such that
\begin{equation*}
\X_k=\int_{-\pi}^{\pi}e^{ikx}\Z(dx),
\end{equation*}
see e.g. \citet[Theorem 11.8.2]{Brockwell}, then for any bounded symmetric intervals $A_1,\ldots,A_p\subset\RR$, the random vector $(\tilde{B}^{(1)}(A_1),\ldots, \tilde{B}^{(p)}(A_p))$ is the weak limit of \linebreak$L^{-1/2}(N)N^{D/2}(Z^{(1)}(N^{-1}A_1),\ldots,Z^{(p)}(N^{-1}A_p))$, see \citet[p. 1172]{Ho}.\\
Note that for any integrable function $h$
\begin{align*}
&\int_{\RR^m}^{''}h(x_1,\ldots,x_m)
\left(\sum_{i=1}^p a_{k_1,\ldots,k_p}^{(i)}\tilde{B}^{(i)}\right)(dx_1)\cdots
\cdots \left(\sum_{i=1}^p a_{k_1,\ldots,k_p}^{(i)}\tilde{B}^{(i)}\right)(dx_m)
\\
=&\sum_{j_1,\ldots,j_m=1}^p a_{k_1,\ldots,k_p}^{(j_1)}\cdots a_{k_1,\ldots,k_p}^{(j_m)}
\int_{\RR^m}^{''}h(x_1,\ldots,x_m)
\tilde{B}^{(j_1)}(dx_1)\cdots \tilde{B}^{(j_m)}(dx_m).
\end{align*}
and therefore
\begin{multline}\label{eq multivarite beweis limit 2 almost sure vorbereitung}
\left\{d_N^{-1}\sum_{j=1}^{\lfloor Nt\rfloor}H_m(\sum_{i=1}^p a_{k_1,\ldots,k_p}^{(i)}X_j^{(i)}):k_1,\ldots,k_p=m, t\in[0,1]\right\}\xrightarrow{~~~~d~~~~}\\
\left\{\sum_{j_1,\ldots,j_m=1}^p a_{k_1,\ldots,k_p}^{(j_1)}\cdots a_{k_1,\ldots,k_p}^{(j_m)}
Z_{j_1,\ldots,j_m}(t):k_1,\ldots,k_p=m, t\in[0,1]\right\},
\end{multline}
where
\begin{align*}
&Z_{j_1,\ldots,j_m}(t)\\
=&\tilde{K}_{j_1,\ldots,j_m}(m,D)
 \int_{\RR^m}^{''}\frac{e^{it(x_1+\ldots+x_m)}-1}{i(x_1+\ldots+x_m)}\prod_{j=1}^m|x_j|^{-(1-D)/2}\tilde{B}^{(j_1)}(dx_1)\cdots \tilde{B}^{(j_m)}(dx_m)
\end{align*}
and $\tilde{K}_{j_1,\ldots,j_m}(m,D)$ is a suitable constant. Using the almost sure representation theorem, see \citet[p. 71]{Pollard}, we can find suitable versions $(\tilde{S}_N(t))_{N\geq1}$ and $(\tilde{Z}(t))$ such that this convergence holds almost surely. Since the functions $I(f;k_1,\ldots,k_p)$, $k_1+\ldots+k_p=m$, are bounded we have
\begin{multline*}
\left\{d_N^{-1}I(f;k_1,\ldots,k_p)\tilde{S}_N^{(k_1,\ldots,k_p)}(t):k_1+\ldots+k_p=m, f\in\F, t\in[0,1]\right\}\xrightarrow{~~~~a.s.~~~~}\\
\left\{I(f;k_1,\ldots,k_p)\tilde{Z}^{(k_1,\ldots,k_p)}(t):k_1+\ldots+k_p=m, f\in\F, t\in[0,1]\right\}
\end{multline*}
and consequently
\begin{align*}
\begin{multlined}[t][\textwidth-4mm]
\Biggl\{d_N^{-1}\sum_{j=1}^{\lfloor Nt\rfloor}\sum_{k_1+\ldots+k_p=m}I(f;k_1,\ldots,k_p)H_m\left(\sum_{i=1}^p 
a_{k_1,\ldots,k_p}^{(i)}X_j^{(i)}\right): f\in\F, t\in[0,1]\Biggr\}\\
\xrightarrow{~~~~d~~~~}
\Biggl\{\sum_{j_1,\ldots,j_m=1}^p \sum_{k_1+\ldots+k_p=m}a_{k_1,\ldots,k_p}^{(j_1)}\cdots a_{k_1,\ldots,k_p}^{(j_m)}
I(f;k_1,\ldots,k_p)Z_{j_1,\ldots,j_m}(t):\\ f\in\F, t\in[0,1]\Biggr\}
\end{multlined}
\end{align*}
in $\l^{\infty}(\F\times[0,1])$. To conclude we have to verify that the limiting process is the one stated in Theorem \ref{Theorem Limit Functions}.
\begin{align*}
&\sum_{j_1,\ldots,j_m=1}^p\sum_{k_1+\ldots+k_p=m}
I(f;k_1,\ldots,k_p) a_{k_1,\ldots,k_p}^{(j_1)}\cdots a_{k_1,\ldots,k_p}^{(j_m)}\\
=&\sum_{j_1,\ldots,j_m=1}^p\sum_{\substack{k_1+\ldots+k_p=m\\l_1+\ldots+l_p=m}}
J_{l_1,\ldots,l_p}(f)\left(\prod_{i=1}^p(l_i!)^{-1}\right)
b(k_1,\ldots,k_p,l_1,\ldots,l_p)a_{k_1,\ldots,k_p}^{(j_1)}\cdots a_{k_1,\ldots,k_p}^{(j_m)}\\
=&\begin{cases}(m!)^{-1}J_{l_1,\ldots,l_p}(f)~~~~&\textnormal{ if }l_i=i(j_1,\ldots,j_m):=\left|\{j_u=i:u=1,\ldots,m\}\right|\\0
&\textnormal{ otherwise.}\end{cases}
\end{align*}
At this point we can close the proof since for $l_i=i(j_1,\ldots,j_m)$, $1\leq i\leq p$, we have
\begin{equation*}
\tilde{J}_{j_1,\ldots,j_m}(f)=(m!)^{-1}J_{l_1,\ldots,l_p}(f).
\end{equation*}
\end{proof}
\section*{Acknowledgements}
The author's research was supported by Collaborative Research Center SFB 823 \textit{Statistical modeling of nonlinear dynamic processes}.
\newpage
\bibliography{literatur}
\end{document}